\numberwithin{equation}{section}
\newtheorem{thm}{Theorem}[section]
\newtheorem*{thm*}{Theorem}
\newtheorem{prop}[thm]{Proposition}
\newtheorem*{prop*}{Proposition}
\newtheorem{lemma}[thm]{Lemma}
\newtheorem*{lemma*}{Lemma}
\newtheorem{cor}[thm]{Corollary}
\newtheorem*{cor*}{Corollary}
\theoremstyle{definition}
\newtheorem{defn}[thm]{Definition}
\newtheorem{example}[thm]{Example}
\newtheorem{examples}[thm]{Examples}
\newtheorem*{eg*}{Example}
\newtheorem*{egs*}{Examples}
\newtheorem{remark}[thm]{Remark}
\newenvironment{rmk}[1][]{\begin{remark}[#1] \pushQED{\qed}}{\popQED \end{remark}}
\newtheorem{remarks}[thm]{Remarks}
\newtheorem*{defn*}{Definition}
\theoremstyle{remark}
\newtheorem*{rmk*}{Remark}
\title{Transversality Methods for Homotopy Groups of Stable Loci in Affine GIT Quotients}
\author{Yizhi Wang}
\date{}
\address{Department of Mathematics, University of York, York, YO10 5GH, United Kingdom}
\email{cfc536@york.ac.uk}
\begin{document}

\begin{abstract}
    We investigate the homotopy groups of stable loci in affine Geometric Invariant Theory (GIT), arising from linear actions of complex reductive algebraic groups on complex affine spaces. Our approach extends the infinite-dimensional transversality framework of Daskalopoulos–Uhlenbeck and Wilkin to this general GIT setting. Central to our method is the construction of a G-equivariant holomorphic vector bundle over the conjugation orbit of a one-parameter subgroup (1-PS), whose fibres are precisely the negative weight spaces determining instability.

    A key proposition establishes that a naturally defined evaluation map is transverse to the zero section of this bundle, implying that generic homotopies avoid all unstable and strictly semistable strata under certain dimensional inequalities. Consequently, our main theorem shows that the stable locus $V^{st}(\rho)$ is $(d_{\text{min}} - 2)$-connected. The connectivity bound is defined as $d_{\text{min}} = \min_{j} (2m_j - 2 \dim_{\mathbb{C}}(\mathrm{G} \cdot \lambda_j))$, where $m_j$ are the ranks of the negative weight spaces and $\dim_{\mathbb{C}}(\mathrm{G} \cdot \lambda_j)$ are the dimensions of the relevant 1-PS orbits.

    Our result also covers cases where semistability does not coincide with stability. The applicability of this framework is illustrated on several examples. In linear control theory, where GIT stability corresponds to the notion of controllability, our results determine the connectivity of the space of controllable systems. In statistical modelling, we compute the connectivity bounds for the space of samples in star-shaped Gaussian graphical models for which the maximum likelihood estimate is unique. We also consider Helmke systems and show that for stability parameters satisfying certain bounds, the space of systems that are both controllable and observable is exactly the space of stable points. The main result can then be used to compute the connectivity of this space.
\end{abstract}

\maketitle

\section{Introduction}
Geometric Invariant Theory (GIT) provides a powerful framework for constructing moduli spaces in terms of quotients by group actions \cite{MumfordFogartyKirwan1994}. This paper utilises GIT techniques to investigate the homotopy groups of stable loci arising from linear actions of complex reductive algebraic groups on complex affine spaces.

An important example of a GIT quotient is the moduli space of semistable holomorphic bundles over a compact Riemann surface. When the degree and rank of the bundle are not coprime, the stable locus is an open subset of this moduli space, and it is an important problem to understand the topology of this subset. Daskalopoulos and Uhlenbeck \cite{DaskalopoulosUhlenbeck1995} used an infinite-dimensional version of Thom's transversality theorem to compute the homotopy groups of the stable locus up to a dimensional bound that can be computed from the degree and rank of the bundle. Their method demonstrated that the space of stable holomorphic structures is $2(g-1)(n-1)$-connected, where $g\geq 2$ is the genus of the Riemann surface.

Let $\mathrm{G}$ be a complex reductive algebraic group acting linearly on a complex affine space $V \cong \mathbb{C}^n$ via a linear representation $\sigma : \mathrm{G} \to \operatorname{GL}(V)$. Given a character $\rho : \mathrm{G} \to \mathbb{C}^\times$, one can define stability conditions on $V$ using semi-invariant polynomials transforming under the action of $\mathrm{G}$ with weights determined by $\rho$. $V$ can then be stratified into $\rho$-stable, $\rho$-semistable, and $\rho$-unstable loci. Hoskins \cite{HoskinsStratAffine}
showed that the Hesselink's stratification by adapted one-parameter subgroups (1-PS) coincides with the Morse-theoretic stratification induced by the norm square of the moment map associated with a maximal compact subgroup $K\subset \mathrm{G}$.

Wilkin \cite{WilkinQuiverTransversality} adapted these techniques to compute homotopy groups of the stable locus of moduli spaces of representations of a quiver, where the space of $\alpha$-stable representations is shown to have trivial homotopy group up to a dimension determined by the minimum of the unstable strata and a dimensional bound determined from the strictly semistable locus. In this paper, we generalise Wilkin's method to the case of affine GIT quotients associated with linear representations.

Since this paper studies homotopy groups of stable loci in affine GIT, a natural question is how its fundamental group relates to that of the original space. In the projective setting, Biswas, Hogadi, and Parameswaran \cite{biswas2014fundamentalgroupgeometricinvariant} prove that if a connected reductive group $\mathrm{G}$ acts on a smooth connected projective variety $M$ and the GIT quotient $M\sslash \mathrm{G}$ is nonempty, then the induced homomorphism $\pi_1(M) \to \pi_1(M\sslash \mathrm{G})$ on fundamental groups is an isomorphism. In particular, over $\mathbb{C}$, the corresponding map of topological fundamental groups is also an isomorphism \cite[Theorem 4.4]{biswas2014fundamentalgroupgeometricinvariant}. This phenomenon has a symplectic analogue. For Hamiltonian actions of connected compact Lie groups, Li \cite{Li2006} shows that the fundamental group of a compact symplectic manifold identifies with that of its symplectic reduction. Moreover, in the projective setting, the Kempf-Ness theorem \cite[Theorem 8.3]{MumfordFogartyKirwan1994} identifies the projective GIT quotient with the symplectic reduction by a maximal compact subgroup, which allows one to consider the $\pi_1$-invariance results for projective GIT in the symplectic context.

By contrast, the present paper studies the homotopy groups of the stable locus $V^{st}$ inside an affine representation space. While Biswas et al. \cite{biswas2014fundamentalgroupgeometricinvariant} show that, in the projective setting, the fundamental group of a GIT quotient is the same as that of the original space, our main result Theorem \ref{Thm 2.6} gives a connectivity result for the stable locus itself. More precisely, if
$$
d_{\min}= \min_j\{ 2\dim_\mathbb{C}V(\lambda_j)_- - 2\dim_\mathbb{C} (\mathrm{G}\cdot \lambda_j)\},
$$
where $V(\lambda_j)_-$ denotes the negative weight space, then the stable locus $V^{st}$ is $(d_{\min}-2)$-connected. This result is equivalent to
$$
\pi_n(V^{st}) = 0 \quad \text{for } n + 1 < d_{\min}.
$$
Thus the present paper computes homotopy groups of the stable locus inside the semistable locus up to an explicit dimensional bound. A further point of context is provided by Hoskins \cite{HoskinsStratAffine}, where she proves an affine version of the Kempf-Ness theorem with respect to a character $\rho$. In the present paper, however, the ambient space is $V \cong \mathbb{C}^N$, which is contractible, and the nontriviality of homotopy groups arises from removing unstable and strictly semistable strata.

In Section 2, we construct a $\mathrm{G}$-equivariant holomorphic vector bundle $W \to \mathrm{G}\cdot \lambda$ over the conjugation orbit of a destabilising one-parameter subgroup. The fibres of this bundle are the negative weight spaces $W_{\lambda'} \cong V(\lambda')_-$, given by the Hilbert-Mumford criterion. A point is destabilised by $\lambda'$ when its component in the fibre $V(\lambda')_-$ vanishes, corresponding to the zero section $\mathcal{O}_W$ of the bundle. We define a map $D$ from the space of homotopies into $W$, which detects these negative weight components. Proposition \ref{Prop 2.4} establishes that $D$ is transverse to the zero section $\mathcal{O}_W$. This transversality condition implies that, for generic homotopies, the preimage of the zero section is a submanifold of the expected codimension. In particular, under the relevant dimension inequality, this preimage is empty, so the homotopy lies entirely in the stable locus.

As in previous works by Daskalopoulos \& Uhlenbeck and Wilkin, this transversality condition implies that, for generic homotopies $f \in \mathcal{F}$ satisfying the dimension inequality $n+1+\dim_\mathbb{R}(\mathrm{G}\cdot \lambda) < 2 \cdot \operatorname{rank}_\mathbb{C}(W)$, the image of $f$ avoids the non-stable strata associated with that conjugacy class of destabilising 1-PSs. Consequently, the main theorem Theorem \ref{Thm 2.6} proves that the stable locus $V^{st}(\rho)$ is $(d_{\min} - 2)$-connected, where $d_{\min}$ is defined in terms of the minimal rank of the negative weight spaces. This result is useful when semistability does not coincide with stability, and the transversality techniques employed in this paper thus provide a method for determining the connectivity and calculating the homotopy groups of stable loci in general affine GIT settings.

An example of such a setting is provided in Section \ref{Helmke systems Section}, which describes Helmke systems in control theory \cite{Bader2008}. Proposition \ref{stability and  controllability and observability} shows that (for certain stability parameters) the stable points correspond precisely to the systems that are both controllable and observable. Theorem \ref{Thm 2.6} can then be used to compute the connectivity of the space of controllable and observable systems.

Section 3 contains two applications of the general theorem. First, we consider classical linear control systems
$$\dot{x} = Ax + Bu,
$$
where $\operatorname{GL}_n(\mathbb{C})$ acts on the space of pairs $(A,B)$ by change of basis in the state space. For the determinant character, GIT stability is equivalent to controllability \cite{Geiss2006ModuliQuivers}. Applying our transversality results gives connectivity properties of the space of controllable systems. The resulting path-connectedness, for instance, implies that any controllable system $(A, B)$ can be continuously deformed into any other controllable system $(A', B')$ through a path consisting entirely of controllable systems.

The second application we consider is star-shaped Directed Acyclic Graph (DAG) models, which correspond to a linear regression problem. In this setting, GIT stability corresponds to the existence of a unique the Maximum Likelihood Estimate (MLE). Applying our transversality results allows us to determine the connectivity of the space $V^{st}$ of samples with a unique MLE \cite{bérczi2023completecollineationsmaximumlikelihood}. For instance, the path-connectedness implies that any sample $f$ with a unique MLE can be continuously deformed into any other such sample through a continuous path that lies entirely in $V^{st}$. This demonstrates that the property of having a unique MLE is robust to small perturbations in the data and that the space of well-behaved data forms a single, connected space, which thus provides a foundation for methods that resolve the non-identifiability of degenerate data by perturbing the data towards this stable locus.

Section 4 further extends the application of the transversality method to Helmke systems, which generalise a linear dynamical system $(A, B, C, D)$ by introducing additional matrices $E$ and $F$, so that the system is described by the equations
$$
Ex_{k+1} = Ax_k + Bu_k, \quad Fy_k = Cx_k + Du_k.
$$
Bader \cite{Bader2008} showed that the space $\mathcal{H}_{n, m, p}$ of Helmke systems can be realised as a quiver representation space with marked vertices, equipped with an action of $\mathrm{GL}_n(\mathbb{C}) \times \mathrm{GL}_n(\mathbb{C}) \times \mathrm{GL}_p(\mathbb{C})$ by change of bases. For characters lying in a suitable chamber, Helmke controllability is equivalent to GIT stability and semistability. This construction is useful because the moduli space of controllable classical linear dynamical system is non-projective, whereas the Helmke systems give a smooth projective compactification containing the original moduli space as a dense open subset.

We study the case of Helmke systems of type $(2, 3, 1)$ and the boundary character $\rho = (-2, 3, 1)$. For this character, semistability and stability need not coincide. We construct explicit stable and strictly semistable Helmke systems, analyse the corresponding subrepresentation inequalities, and compute the relevant negative weight dimensions and orbit dimensions. This gives $d_{\min} = 2$, and hence the non-empty stable locus is $\mathcal{H}^{st}_{2, 3, 1}(\rho)$ is path-connected. We also study the relation between Helmke-observability and GIT stability, comparing the resulting character chambers with the classical controllability and observability chambers.

The paper is organised as follows. In Section 2, we lay out the general theoretical framework, introducing the necessary GIT background, and proving the key transversality result and its implications for homotopy groups. Section 3 contains the applications to controllable linear systems and to star-shaped Gaussian graphical models. In Section 4, we consider Helmke systems, where we recall Bader's quiver description \cite{Bader2008}, analyse a boundary character for systems of type $(2, 3, 1)$, and apply our transversality framework to obtain a connectivity result for systems of this type. Moreover, we study the relation between Helmke-observability and GIT stability.

\section{Geometric Invariant Theory and Transversality}\label{GITANDTRANSECTION}
Let $\mathrm{G}$ be a complex reductive algebraic group acting linearly on a complex affine space $V \cong \mathbb{C}^N$ for some $N = \operatorname{dim}_\mathbb{C}V$, via a faithful linear representation $\sigma : \mathrm{G} \hookrightarrow \operatorname{GL}(V)$. The $\mathrm{G}$-action on $V$ induces an action on its ring of regular functions, $\mathbb{C}[V]$, defined by $(g\cdot f)(v) = f(g^{-1} \cdot v)$ for $g\in \mathrm{G}, v \in V, f \in \mathbb{C}[V]$. We denote the subring of $\mathrm{G}$-invariant functions by $\mathbb{C}[V]^\mathrm{G}$, and then the affine GIT quotient is defined as $V/\!/\mathrm{G} := \operatorname{Spec}\mathbb{C}[V]^\mathrm{G}$. Note that $\mathbb{C}[V]^\mathrm{G}$ being finitely generated $\mathbb{C}$-algebra \cite{Dolgachev_2003} ensures that the quotient $V/\!/\mathrm{G}$ is an affine variety.

Let $\rho : \mathrm{G} \to \mathbb{C}^\times$ be a character of $\mathrm{G}$. A regular function $f \in \mathbb{C}[V]$ is a semi-invariant of weight $\rho^n$ if it transforms under the $\mathrm{G}$-action associated to $f(g \cdot v) = \rho(g)^nf(v)$ for all $g\in \mathrm{G}, v \in V$. The set of such functions for a fixed $n$ is denoted $\mathbb{C}[V]^\mathrm{G}_{\rho^n}$, and the collection of all such semi-invariants forms a graded ring $\bigoplus_{n\in\mathbb{Z}}\mathbb{C}[V]^\mathrm{G}_{\rho^n}$.

With a chosen character $\rho$, we can define notions of stability for points in $V$.
\begin{defn}
    Let $v \in V$.
    \begin{itemize}
        \item $v$ is \emph{$\rho$-semistable} if there exists a non-constant $\rho^n$-semi-invariant polynomial $f \in \mathbb{C}[V]^\mathrm{G}_{\rho^n}$ for some integer $n > 0$ such that $f(v) \neq 0$. The set of all semistable points form the $\rho$-semistable locus $V^{ss}(\rho)$.
        \item A $\rho$-semistable point $v$ is \emph{$\rho$-stable} if its $\mathrm{G}$-orbit $\mathrm{G}\cdot v$ is closed in $V^{ss}(\rho)$, and its stabiliser subgroup $\mathrm{G}_v = \{g\in \mathrm{G} \mid g \cdot v = v \}$ is finite. The set of such points is denoted $V^{st}(\rho)$.
        \item $v$ is \emph{$\rho$-unstable} if it is not $\rho$-semistable. This means $f(v)=0$ for all non-constant $f \in \mathbb{C}[V]^\mathrm{G}_{\rho^n}$ with $n > 0$, and we have $V^{us}(\rho) = V\setminus V^{ss}(\rho)$.
    \end{itemize}
\end{defn}
A one-parameter subgroup of $\mathrm{G}$ is a group homomorphism $\lambda: \mathbb{C}^\times \to \mathrm{G}$. The Hilbert-Mumford Criterion provides a numerical way to check these stability conditions by examining the behaviour of points under the action of one-parameter subgroups (1-PSs) of $\mathrm{G}$.

Let $K\subset \mathrm{G}$ be the maximal compact subgroup of $\mathrm{G}$, and $V$ be endowed with a $K$-invariant Hermitian inner product. Fix a non-trivial one-parameter subgroup $\lambda : S^1 \to \mathrm{G}$. The conjugation orbit of $\lambda$ is
$$
\mathrm{G}\cdot \lambda := \{g\lambda g^{-1} \mid g \in \mathrm{G} \} \cong \mathrm{G}/C_\mathrm{G}(\lambda),
$$
where $C_\mathrm{G}(\lambda)$ is the centraliser of $\lambda(S^1)$ in $\mathrm{G}$. In particular, $\mathrm{G} \cdot \lambda$ is a smooth projective variety. For each $\lambda' \in \mathrm{G}\cdot \lambda$, the representation $V$ decomposes into
$$
V = \bigoplus_{i \in \mathbb{Z}}V_i(\lambda'),\quad V_i(\lambda') = \{v\in V \mid \lambda'(t) \cdot v = t^iv \},
$$
and so we can denote
$$
V(\lambda')_- = \bigoplus_{i < 0}V_i(\lambda'),\quad V(\lambda')_{\geq 0} = \bigoplus_{i\geq 0} V_i(\lambda').
$$
Set
\begin{equation}\label{eq: 1}
W := \{(\lambda',v)\in (\mathrm{G}\cdot \lambda) \times V \mid v \in V(\lambda')_- \}. \tag{1}
\end{equation}
The projection $\pi : W \to \mathrm{G}\cdot \lambda$ defined by $(\lambda', v) \mapsto \lambda'$ is then a vector bundle. Then the \emph{zero section} of $W$ is denoted by $\mathcal{O}_W := \{(\lambda',0) \mid \lambda' \in \mathrm{G}\cdot \lambda \}$.

\begin{lemma}
    Let $v \in V$ and $\lambda' \in \mathrm{G} \cdot \lambda$. The point $v$ is destabilised by the 1-PS $\lambda'$ if and only if $(\lambda', \pi^\perp_{\lambda'}(v)) \in \mathcal{O}_W$, where $\pi^\perp_{\lambda'}: V \to V(\lambda')_-$ is the projection onto the negative weight space $V(\lambda')_-$.
\end{lemma}
\begin{proof}
    By the Hilbert-Mumford criterion, a point $v$ is destabilised by a given 1-PS $\lambda'$ if the limit $\lim_{t \to 0}\lambda'(t) \cdot v$ exists in $V$. Note that for any $\lambda' \in \mathrm{G} \cdot \lambda$, we have the weight space decomposition
    $$
    V = \bigoplus_{i \in \mathbb{Z}} V_i(\lambda'),
    $$
    where the action on each subspace is given by $\lambda'(t) \cdot u = t^i u$ for all $u \in V_i(\lambda')$. A point $v \in V$ can be decomposed into a finite sum $v = \sum_{ i \in \mathbb{Z}} v_i$ with $v_i \in V_i(\lambda')$. So the action of the 1-PS on $v$ is
    $$
    \lambda'(t) \cdot v = \sum_{i \in \mathbb{Z}} t^iv_i.
    $$

    For the limit $\lim_{t \to 0} \sum_{i \in \mathbb{Z}}t^iv_i$ to exist in the affine space $V$, the expression cannot contain any terms with negative powers of $t$. This requires that $v_i = 0$ for all $i < 0$. The direct sum of these negative weight spaces forms $V(\lambda)_- = \bigoplus_{i < 0} V_i(\lambda')$. So the orthogonal projection of $v$ onto this negative weight space is $\pi^\perp_{\lambda'}(v) = \sum_{i < 0} v_i$.

    Consequently, the limit exists if and only if $\pi^\perp_{\lambda'} (v) = 0$, which holds if and only if the pair $(\lambda', \pi^\perp_{\lambda'}(v)) = (\lambda', 0)$. This is equivalent to the condition $(\lambda', \pi^\perp_{\lambda'}(v)) \in \mathcal{O}_W$.
\end{proof}

\begin{prop}
    $\pi : W \to \mathrm{G}\cdot \lambda$ is a holomorphic $\mathrm{G}$-equivariant vector bundle of rank $m := \operatorname{dim}_\mathbb{C}V(\lambda)_-$, which is isomorphic to the associated bundle $\mathrm{G} \times_{C_\mathrm{G}(\lambda)} V(\lambda)_- \to \mathrm{G}/C_\mathrm{G}(\lambda)$.
\end{prop}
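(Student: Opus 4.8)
The plan is to realise $W$ concretely as the associated bundle $G\times_{C_G(\lambda)}V(\lambda)_-$ and to read off holomorphy, the rank, and $G$-equivariance from that description, controlling the variation of the weight spaces by means of local holomorphic sections of $G\to G/C_G(\lambda)$. First I would record the identification of the base already noted in the setup: the orbit map $g\mapsto g\lambda g^{-1}$ descends to a $G$-equivariant isomorphism of complex varieties $G/C_G(\lambda)\xrightarrow{\sim}G\cdot\lambda$, and, $G$ being a complex Lie group and $C_G(\lambda)$ a closed complex subgroup, $G\to G/C_G(\lambda)$ is a holomorphic principal $C_G(\lambda)$-bundle; in particular it admits holomorphic local sections $s\colon U\to G$ over an open cover of $G\cdot\lambda$, each satisfying $s(\lambda')\lambda s(\lambda')^{-1}=\lambda'$.

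Next I would analyse how the weight decomposition transforms under conjugation. For $\lambda'=g\lambda g^{-1}$ one has $\lambda'(t)v=t^{i}v\iff\lambda(t)(g^{-1}v)=t^{i}(g^{-1}v)$, so $V_i(\lambda')=g\cdot V_i(\lambda)$ and hence $V(\lambda')_-=g\cdot V(\lambda)_-$. This expression depends on $g$ only through its class in $G/C_G(\lambda)$: replacing $g$ by $gc$ with $c\in C_G(\lambda)$ does not change $\lambda'$, while $c$ commutes with every $\lambda(t)$ and therefore preserves each $V_i(\lambda)$, so $gc\cdot V(\lambda)_-=g\cdot V(\lambda)_-$. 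In particular $C_G(\lambda)$ acts on $V(\lambda)_-$ by the restriction of $\sigma$, so the associated holomorphic vector bundle $G\times_{C_G(\lambda)}V(\lambda)_-\to G/C_G(\lambda)$ is defined, of rank $\dim_{\mathbb{C}}V(\lambda)_-=m$.

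Then I would establish both that $\pi\colon W\to G\cdot\lambda$ is a holomorphic bundle of rank $m$ and that it is isomorphic to the associated bundle. For the first part, over a trivialising open $U$ with section $s$ let $P_0\in\operatorname{End}(V)$ be the projection onto $V(\lambda)_-$ along $V(\lambda)_+$ and put $P(\lambda'):=s(\lambda')P_0\,s(\lambda')^{-1}$; by the computation above $P(\lambda')$ is the projection onto $V(\lambda')_-$ along $V(\lambda')_+$ and depends holomorphically on $\lambda'\in U$, so $W\cap(U\times V)=\ker(\operatorname{id}_V-P(\cdot))$ is the kernel of a holomorphic endomorphism of the trivial bundle $U\times V$ of constant rank $N-m$, hence a holomorphic subbundle of rank $m$; these patches glue over the cover. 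For the isomorphism I would take
\[
\Phi\colon G\times_{C_G(\lambda)}V(\lambda)_-\longrightarrow W,\qquad [g,v]\longmapsto\bigl(g\lambda g^{-1},\,g\cdot v\bigr),
\]
which is well defined and fibrewise linear by the previous step, and which in the chart of $s$ reads $(\lambda',v)\mapsto(\lambda',s(\lambda')\cdot v)$ with visibly holomorphic inverse $(\lambda',w)\mapsto(\lambda',s(\lambda')^{-1}\cdot w)$, hence a biholomorphism of vector bundles. Finally, $G$ acts on $W\subset(G\cdot\lambda)\times V$ by $h\cdot(\lambda',v)=(h\lambda'h^{-1},h\cdot v)$ — well defined since $h\cdot V(\lambda')_-=V(h\lambda'h^{-1})_-$ — and on $G\times_{C_G(\lambda)}V(\lambda)_-$ by $h\cdot[g,v]=[hg,v]$, and $\Phi$ intertwines these two holomorphic, fibrewise-linear actions, which gives the claimed $G$-equivariant structure.

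The main obstacle is the single point that is not purely formal: verifying that the subset $W\subset(G\cdot\lambda)\times V$ is a holomorphic submanifold on which $\pi$ is locally trivial, equivalently that the splitting $V=V(\lambda')_-\oplus V(\lambda')_+$ varies holomorphically in $\lambda'$. This is exactly what the constant-rank projection $P(\cdot)$ supplies, and it rests on the existence of holomorphic local sections of $G\to G/C_G(\lambda)$ invoked at the outset; once that is in hand, the remaining checks (well-definedness and fibrewise linearity of $\Phi$, the intertwining property, and the rank count) are routine.
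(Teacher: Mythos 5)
Your proof is correct and reaches the same isomorphism $\Phi\colon[g,v]\mapsto(g\lambda g^{-1},g\cdot v)$ as the paper, but the two arguments diverge on how the holomorphic structure on $W$ is established, and your route is the more careful one on that point. The paper first proves $\widetilde{\Phi}\colon G\times_{C_G(\lambda)}V(\lambda)_-\to W$ is bijective, observes that $\Phi=\widetilde{\Phi}\circ q$ factors through the holomorphic quotient submersion $q$ so that $\widetilde{\Phi}$ is holomorphic, and then appeals to the inverse function theorem to get a biholomorphism --- without actually computing the differential $d\widetilde{\Phi}_{[g,v]}$ it says ``it suffices to show'' is an isomorphism, and while leaving somewhat implicit what complex structure $W$ carries before the transport of structure is made. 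You instead endow $W$ with a holomorphic bundle structure \emph{intrinsically}, as a subset of $(G\cdot\lambda)\times V$: via holomorphic local sections $s$ of the principal bundle $G\to G/C_G(\lambda)$ you produce the holomorphically varying projector $P(\lambda')=s(\lambda')P_0\,s(\lambda')^{-1}$ whose image is $V(\lambda')_-$, identifying $W$ locally as the kernel of a constant-rank holomorphic bundle endomorphism of $U\times V$; this buys you a submanifold/subbundle structure on $W$ up front. It also lets you verify the biholomorphism concretely --- in the chart of $s$ the map and its inverse read $(\lambda',v)\mapsto(\lambda',s(\lambda')\cdot v)$ and $(\lambda',w)\mapsto(\lambda',s(\lambda')^{-1}\cdot w)$, both visibly holomorphic --- so you never need to invoke the inverse function theorem. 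The well-definedness of the associated bundle (that $C_G(\lambda)$ preserves $V(\lambda)_-$), the fibrewise linearity, the rank count, and the $G$-equivariance are then the same routine checks as in the paper.

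One small point worth making explicit if you write this up: the projector $P(\lambda')$ you build from a local section is the projection onto $V(\lambda')_-$ along $V(\lambda')_+$, and this is generally \emph{not} the orthogonal projection $\pi_{\lambda'}^\perp$ used later in the paper (the section $s(\lambda')$ need not be unitary). That is harmless here, since for defining $W$ and its bundle structure any complement-determined projection works, but the distinction matters once the Hermitian metric enters the argument in Proposition 2.3.
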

\begin{proof}
    Consider the map $\Phi : \mathrm{G}\times V(\lambda)_- \to W$ defined by
    $$
    \Phi(g,v) = (g\lambda g^{-1}, g\cdot v).
    $$
    To check that the codomain is $W$, let $\lambda' = g\lambda g^{-1}$, and consider the action of $\lambda'(t)$ on $g \cdot v_i$, which is given by $\lambda'(t)(g\cdot v_i) = (g\lambda(t)g^{-1})(g\cdot v_i) = g\lambda(t)(g^{-1}g)v_i = g\lambda(t)v_i = g(t^iv_i) = t^i(g \cdot v_i)$. Hence for any $(g,v) \in \mathrm{G} \times V(\lambda)_-$, the vector $g\cdot v$ lies in $V(g\lambda g^{-1})_-$, so $\Phi(g,v) \in W$. Moreover, let $n\in C_\mathrm{G}(\lambda)$, then
    $$\Phi(gn^{-1}, n\cdot v ) = (gn^{-1}\lambda ng^{-1}, gn^{-1}n\cdot v) = (g\lambda g^{-1}, g\cdot v) = \Phi(g, v),
    $$
    which implies that $\Phi$ is invariant under the $C_\mathrm{G}(\lambda)$-action.
    
    Note that $\mathrm{G}\times_{C_\mathrm{G}(\lambda)}V(\lambda)_- := (\mathrm{G} \times V(\lambda)_-)/\sim$, where $(g_1,v_1) \sim (g_2,v_2)$ if and only if there exists an $n\in C_\mathrm{G}(\lambda)$ such that $(g_2,v_2) = (g_1n^{-1}, n\cdot v_1)$, therefore $\Phi$ descends to a well-defined map
    $$
    \widetilde{\Phi} : \mathrm{G}\times_{C_\mathrm{G}(\lambda)}V(\lambda)_- \to W, \quad \widetilde{\Phi}([g,v]) = \Phi(g,v)=(g\lambda g^{-1}, g\cdot v),
    $$
    and we shall prove that $\widetilde{\Phi}$ is bijective.
    
    For the surjectivity, let $(\lambda',w) \in W$ be an arbitrarily chosen element, then there exists some $g\in \mathrm{G}$ such that $\lambda' = g\lambda g^{-1}$. Define $v := g^{-1}\cdot w$, then for $v_j = g^{-1}w_j$ we have $\lambda(t)v_j = \lambda(t)(g^{-1}w_j) = g^{-1}(g\lambda(t)g^{-1})w_j = g^{-1}(t^jw_j) = t^j(g^{-1}w_j) = t^jv_j$ Thus $v_j \in V(\lambda)_-$, which implies that $v = g^{-1}w \in V(\lambda)_-$. Now consider $[g,v] \in \mathrm{G}\times_{C_\mathrm{G}(\lambda)}V(\lambda)_-$, observe that $\widetilde{\Phi}([g,v]) = (g\lambda g^{-1}, g\cdot v) = (\lambda', g\cdot(g^{-1}w)) = (\lambda', w)$. Hence $\widetilde{\Phi}$ is surjective.

    To prove the injectivity, assume $\widetilde{\Phi}([g_1,v_1]) = \widetilde{\Phi}([g_2,v_2])$. This means $(g_1\lambda g_1^{-1}, g_1\cdot v_1) = (g_2\lambda g_2^{-1}, g_2\cdot v_2)$, then $g_1\lambda g_1^{-1} = g_2\lambda g_2^{-1}$ implies that $(g_2^{-1}g_1) \lambda (g_2^{-1}g_1)^{-1} = \lambda$. Let $h = g_2^{-1}g_1 \in C_\mathrm{G}(\lambda)$, which gives $g_1 = g_2h$. Furthermore, note that from $g_1\cdot v_1 = g_2\cdot v_2$ we can get $(g_2h)\cdot v_1 = g_2 \cdot v_2$, implying $h \cdot v_1 = v_2$. Thus we have $[g_1, v_1] = [g_2h, v_1]$, which is equivalent to $((g_2h)h^{-1}, h\cdot v_1) = (g_2, h\cdot v_1) = (g_2, v_2)$. Therefore, we have shown that $[g_1, v_1] = [g_2, v_2]$, establishing the injectivity, and we can thus conclude that $\widetilde{\Phi}$ is bijective.

    Now we show the holomorphicity of $\widetilde{\Phi}$. Note that $\mathrm{G}$ is complex Lie group, of which the group multiplication, inversion, and the conjugation are holomorphic maps. And the $\mathrm{G}$-action on $V$ is a holomorphic representation, thus $\Phi$ is a holomorphic map. Since $C_\mathrm{G}(\lambda)$ is a closed complex subgroup of $\mathrm{G}$, the $C_\mathrm{G}(\lambda)$-action is also holomorphic, which is in addition free and proper, hence the quotient $\mathrm{G} \times_{C_\mathrm{G}(\lambda)}V(\lambda)_-$ is a complex manifold and the quotient map $q: \mathrm{G}\times V(\lambda)_- \to \mathrm{G}\times_{C_\mathrm{G}(\lambda)}V(\lambda)_-$ is a holomorphic submersion. Because $\Phi = \widetilde{\Phi}\circ q$, it follows that $\widetilde{\Phi}$ is holomorphic. It is furthermore a biholomorphism by the Inverse Function theorem, where it suffices to show the complex differential $d\widetilde{\Phi}_{[g,v]}$ is a $\mathbb{C}$-linear isomorphism at every point $[g,v] \in \mathrm{G} \times_{C_\mathrm{G}(\lambda)}V(\lambda)_-$, and thus $W$ can be endowed with a complex manifold structure making $\pi: W \to \mathrm{G}\cdot \lambda$ a holomorphic map.

    Since $\widetilde{\Phi}$ is a biholomorphism that is also linear on the fibres and commutes with projections, it is an isomorphism of holomorphic vector bundles. Thus $W$ inherits the structure of a holomorphic vector bundle from $\mathrm{G} \times_{C_\mathrm{G}(\lambda)}V(\lambda)_-$. To see the $\mathrm{G}$-equivariance of $\widetilde{\Phi}$, let $g_0\in \mathrm{G}$ be arbitrary. Then we have $\widetilde{\Phi}(g_0\cdot [g,v]) = ((g_0g)\lambda(g_0g)^{-1}, (g_0g)\cdot v) = (g_0(g\lambda g^{-1})g_0^{-1}, g_0(g\cdot v)) = g_0\cdot (g\lambda g^{-1}, g\cdot v) = g_0 \cdot \widetilde{\Phi}([g,v])$.

    Note that the fibre $W_{\lambda'} = V(\lambda')_-$. If $\lambda' = g\lambda g^{-1}$, the map $v \mapsto g \cdot v$ restricts to an isomorphism from $V(\lambda)_-$ to $V(\lambda')_-$. Hence $\operatorname{dim}_\mathbb{C}V(\lambda')_- = \operatorname{dim}_\mathbb{C}V(\lambda)_-$ for all $\lambda' \in \mathrm{G}\cdot \lambda$. Therefore, the rank of the vector bundle $W$ is $m=\operatorname{dim}_\mathbb{C}V(\lambda)_-$.
    
    Denote $E = \mathrm{G}\times_{C_\mathrm{G}(\lambda)}V(\lambda)_-$, $p_E: E \to \mathrm{G}/C_\mathrm{G}(\lambda)$, then we have the following commutative diagram
    $$
    \begin{tikzcd}
        E \arrow[r, "\widetilde{\Phi}"] \arrow[d, "p_E" '] & W \arrow[d,"\pi"] \\
        \mathrm{G}/C_\mathrm{G}(\lambda) \arrow[r, "\phi" '] & \mathrm{G}\cdot \lambda
    \end{tikzcd},
    $$
    where $\phi$ is the canonical isomorphism. So $\pi = \phi \circ p_E \circ \widetilde{\Phi}^{-1}$, and thus we can conclude that $\pi$ is a holomorphic G-equivariant vector bundle that is isomorphic to the associated bundle, as stated. 
\end{proof}
Fix $n \geq 0$ and a based map $f_0 : S^n \to V^{st}$ with $f_0(s_0) = x_0$, and let
$$
\mathcal{F} := \{f:S^n \times I \to V \mid f(s,0) = f_0(s), f(s_0,t) = x_0, f(s,1) = x_0 \},
$$
where $s\in S^n$, and $t\in I$, then its tangent space is
$$
T_f\mathcal{F} = \{\dot{f} \in C^r(S^n\times I, V) \mid \dot{f}(s,0)=0, \dot{f}(s_0,t)=0,\dot{f}(s,1)=0 \}.
$$
For $\lambda' \in \mathrm{G}\cdot \lambda$ let $\pi_{\lambda'}^\perp : V \to V(\lambda')_-$ denote the orthogonal projection. Define
$$
D:\mathcal{F} \times S^n \times I \times (\mathrm{G}\cdot\lambda) \to W, \quad D(f,s,t,\lambda') = (\lambda', \pi_{\lambda'}^\perp(f(s,t))).
$$
Let $p=(f,s,t,\lambda')\in 
\mathcal F\times S^{n}\times I\times (\mathrm{G}\cdot\lambda)$, and let $x:=f(s,t)\in V$. Then a tangent vector at $p$ in $T_p(\mathcal{F} \times S^n \times I \times \mathrm{G}\cdot\lambda)$ can be written as $(\dot{f}, v_s, v_t, \xi)$. Then
$$
dD_p(\dot{f}, v_s, v_t, \xi) = (\xi, \pi_{\lambda'}^\perp(\dot{f}(s,t)+v_s\partial_sf(s,t)+v_t\partial_tf(s,t)) + (d\pi_\bullet^\perp)_{(\lambda', x)}(\xi)),
$$
where $(d\pi_\bullet^\perp)_{(\lambda', x)}(\xi) \in V(\lambda')_-$ denotes the derivative, with respect to $\lambda'$, of $\pi_{\lambda'}^\perp$, evaluated at $x\in V$ and at direction $\xi \in T_{\lambda'}(\mathrm{G}\cdot \lambda)$. To deduce the above formula, we start by writing the evaluation map
$$
\operatorname{ev}: \mathcal{F} \times S^n \times I \to V, \quad (f,s,t) \mapsto f(s,t).
$$
Its differential at $(f,s,t)$ is
$$
d(\operatorname{ev}_{(f,s,t)})(\dot{f}, v_s, v_t) = \dot{f}(s,t) + v_s\partial_sf(s,t) + v_t\partial_tf(s,t) \in V.
$$
Next define $\Phi : (\mathrm{G}\cdot \lambda) \times V \to W$ by $\Phi(\lambda',v) := (\lambda', \pi_{\lambda'}^\perp(v))$, and thus by definition $D = \Phi \circ (\operatorname{id}_{\mathrm{G}\cdot\lambda} , \operatorname{ev})$.

Pick an arbitrary $(\lambda', x)$, we have
$$
d\Phi_{(\lambda',x)}(\xi, u) = (\xi, \pi_{\lambda'}^\perp(u) + (d\pi_\bullet^\perp)_{(\lambda',x)}(\xi)),
$$
where $u$ is a tangent vector of $V$. Then at the point $p = (f,s,t,\lambda')$ we have $dD_p = d\Phi_{(\lambda',x)} \circ (\operatorname{id}, d(\operatorname{ev})_{(f,s,t)})$. Substituting $(\dot{f}, v_s, v_t, \xi)$ and use the above equations yields
\begin{align*}\label{eq: 2}
    dD_p(\dot{f}, v_s, v_t, \xi) &= d\Phi_{(\lambda',x)}(\xi, \dot{f}(s,t) + v_s\partial_s f(s,t) + v_t\partial_t f(s,t))\\
    &= (\xi, \pi_{\lambda'}^\perp (\dot f(s,t) + v_{s}\,\partial_{s}f(s,t) + v_{t}\,\partial_{t}f(s,t)) + (d\pi_\bullet^\perp)_{(\lambda', x)}(\xi)), \tag{2}
\end{align*}
as desired.

We now recall some definitions. Let $X$ be a topological space.
\begin{defn}
    A subset $\mathcal{R} \subset X$ is called \emph{residual} if it contains a countable intersection of open dense subsets of $X$. If $X$ is a \emph{Baire space}, for example, a complete metric space \cite{Rudin1991}, then every residual subset of $X$ is dense. We shall say that a property holds \emph{generically} in $X$ if it holds on some residual subset of $X$.
\end{defn}
\begin{prop}\label{Prop 2.4}
    The map $D$ is transverse to the zero section $\mathcal{O}_W \subset W$; i.e. for every $p = (f,s,t,\lambda')$ satisfying $D(p) \in \mathcal{O}_W$ we have
    $$
    dD_p(T_pM) + T_{D(p)}\mathcal{O}_W = T_{D(p)}W,
    $$
    where $M = \mathcal{F} \times S^n \times I \times (\mathrm{G}\cdot\lambda)$. Moreover, $D_f$ is transverse to the zero section $\mathcal{O}_W$ for $f$ in a residual subset of $\mathcal{F}$.
\end{prop}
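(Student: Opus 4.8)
The plan is to establish transversality of $D$ to $\mathcal{O}_W$ at every point of $M$, and then to deduce the residual statement from the infinite-dimensional parametric transversality theorem (Sard--Smale), following the strategy of Daskalopoulos--Uhlenbeck and Wilkin. The first step is to identify the normal bundle of the zero section: since $\mathcal{O}_W$ is the image of the zero section of $\pi\colon W\to G\cdot\lambda$, at a point $w_0=(\lambda',0)$ the differential $d\pi_{w_0}$ presents $T_{w_0}W$ as an extension of $T_{\lambda'}(G\cdot\lambda)$ by the vertical space $T_0(W_{\lambda'})\cong W_{\lambda'}=V(\lambda')_-$, split by the zero section, so $T_{w_0}W=T_{w_0}\mathcal{O}_W\oplus V(\lambda')_-$ with $T_{w_0}\mathcal{O}_W\cong T_{\lambda'}(G\cdot\lambda)$. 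Hence transversality of $D$ at a point $p=(f,s,t,\lambda')$ with $D(p)=w_0$ is equivalent to surjectivity onto $V(\lambda')_-$ of the vertical part of $dD_p$, which by the expression for $dD_p$ computed above is the map $(\dot{f},v_s,v_t,\xi)\mapsto\pi_{\lambda'}^\perp(\dot{f}(s,t)+v_s\partial_sf(s,t)+v_t\partial_tf(s,t))+(d\pi_\bullet^\perp)_{(\lambda',x)}(\xi)$.

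The heart of the argument is that this surjectivity already comes from the $\dot{f}$-directions alone. I would first observe that $D^{-1}(\mathcal{O}_W)$ is disjoint from $(\{s_0\}\times I)\cup(S^n\times\{0,1\})$, the locus where $\dot f$ is forced to vanish: if $t\in\{0,1\}$ or $s=s_0$ then $f(s,t)$ equals $f_0(s)$ or $x_0$, and since these are $\rho$-stable they are not destabilised by any 1-PS in $G\cdot\lambda$ (Hilbert--Mumford), i.e.\ $\pi_{\lambda'}^\perp(f(s,t))\neq 0$, so $D(p)\notin\mathcal{O}_W$. Thus every $p\in D^{-1}(\mathcal{O}_W)$ has $s\neq s_0$ and $t\in(0,1)$, and there is a neighbourhood $U$ of $(s,t)$ in $S^n\times I$ on which the boundary conditions defining $T_f\mathcal{F}$ are vacuous. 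Given any $w\in V(\lambda')_-\subset V$, take a smooth bump function $\beta$ supported in $U$ with $\beta(s,t)=1$ and set $\dot{f}:=\beta\,w\in T_f\mathcal{F}$; then, with $v_s=v_t=0$ and $\xi=0$, the vertical part of $dD_p$ returns $\pi_{\lambda'}^\perp(w)=w$. Hence $dD_p(T_pM)$ contains all of $V(\lambda')_-$, so $dD_p(T_pM)+T_{w_0}\mathcal{O}_W=T_{w_0}W$; this proves $D\pitchfork\mathcal{O}_W$.

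For the last sentence I would apply the parametric transversality theorem to $D\colon\mathcal{F}\times X\to W$ with $X:=S^n\times I\times(G\cdot\lambda)$. Here $\mathcal{F}$ is an affine subspace of the separable Banach space $C^r(S^n\times I,V)$, $\mathcal{O}_W$ is a closed complex submanifold of $W$, and $D\pitchfork\mathcal{O}_W$ was just shown, so $N:=D^{-1}(\mathcal{O}_W)$ is a submanifold of $\mathcal{F}\times X$ of real codimension $2m$ and the projection $\mathrm{pr}_{\mathcal{F}}\colon N\to\mathcal{F}$ is Fredholm of index $n+1+\dim_{\mathbb{R}}(G\cdot\lambda)-2m$, its differential having finite-dimensional kernel (contained in $TX$) and finite-dimensional cokernel (a quotient of $V(\lambda')_-$). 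Taking $r$ large enough, Sard--Smale then shows that the regular values of $\mathrm{pr}_{\mathcal{F}}$, which are exactly the $f\in\mathcal{F}$ with $D_f\pitchfork\mathcal{O}_W$, form a residual subset of $\mathcal{F}$. (Note that when $n+1+\dim_{\mathbb{R}}(G\cdot\lambda)<2m$ this index is negative, so the generic fibre $D_f^{-1}(\mathcal{O}_W)$ is empty, which is the input needed afterwards.)

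The only genuine obstacle I anticipate is the boundary/constraint point in the second step: one must be sure that transversality is tested only where $\dot{f}$ can actually be perturbed, i.e.\ that $D^{-1}(\mathcal{O}_W)$ lies in the interior region $s\neq s_0$, $0<t<1$, and this is exactly where the hypotheses $f_0(S^n)\subseteq V^{st}$ and $x_0\in V^{st}$ enter. Granted that, the surjectivity is cheap because $V(\lambda')_-$ is finite-dimensional, so one localized variation of $\dot{f}$ suffices, and the remaining passage to a residual set is the standard Sard--Smale / Fredholm package already employed by Daskalopoulos--Uhlenbeck and Wilkin.
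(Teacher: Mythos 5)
Your proof takes essentially the same approach as the paper — a bump-function variation in the $\dot f$-direction makes the vertical component of $dD_p$ surject onto $V(\lambda')_-$, and the residual statement is the standard parametric-transversality package — but yours is actually more complete on one point. The paper's proof simply chooses a bump $\beta$ with $\beta(s,t)=1$ and support avoiding $(S^n\times\{0,1\})\cup(\{s_0\}\times I)$, which tacitly assumes $(s,t)$ lies in the interior region where $T_f\mathcal{F}$ imposes no constraint; you supply the missing justification, namely that $D^{-1}(\mathcal{O}_W)$ avoids the boundary locus because $f_0(S^n)\cup\{x_0\}\subset V^{st}(\rho)$ and a $\rho$-stable point cannot lie in $V(\lambda')_+$ for a destabilising $\lambda'$. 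That lemma is worth stating explicitly, and it does carry the implicit hypothesis (used throughout the paper's downstream Corollary~2.4 and Theorem~2.5) that $\lambda$ is one of the finitely many destabilising 1-PSs: for a wholly arbitrary nontrivial $\lambda$, a stable point could land in $V(\lambda')_+$ and the bump-function trick would have nothing to say at a boundary zero of $D$. For the last sentence, you invoke Sard--Smale and check the Fredholm index of $\mathrm{pr}_{\mathcal F}\colon D^{-1}(\mathcal{O}_W)\to\mathcal{F}$, whereas the paper cites a finite-dimensional Thom transversality theorem; your choice is the technically apt one given that $\mathcal{F}\subset C^r(S^n\times I,V)$ is infinite-dimensional, though both are standard in this circle of ideas.
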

\begin{proof}
    Choose $p=(f,s,t,\lambda')$ with $D(p) = (\lambda',0) \in \mathcal{O}_W$. Since $W \to \mathrm{G}\cdot\lambda$ is a vector bundle with fibre $V(\lambda')_-$, we have splittings
    $$
    T_{(\lambda',0)}W = T_{\lambda'}(\mathrm{G}\cdot \lambda) \oplus V(\lambda')_-, \quad T_{(\lambda',0)}\mathcal{O}_W = T_{\lambda'}(\mathrm{G}\cdot\lambda) \oplus \{0\}.
    $$
    Recall that the previous computations \eqref{eq: 2} imply that the differential at $p$, where $x=f(s,t)$ and $\pi_{\lambda'}^\perp(x) = 0$, is given by
    $$
    dD_p(\dot{f},v_s, v_t, \xi) = (\xi, \pi_{\lambda'}^\perp (\dot f(s,t) + v_{s}\,\partial_{s}f(s,t) + v_{t}\,\partial_{t}f(s,t)) + (d\pi_\bullet^\perp)_{(\lambda', x)}(\xi)).
    $$

    Now fix an arbitrary vector $v \in V(\lambda')_-$, and choose a bump function $\beta : S^n \times I \to \mathbb{R}$ such that
    \begin{itemize}
        \item $\beta(s, t)=1$,
        \item $\operatorname{supp}\beta$ is contained in a small ball that does not intersect the boundary $(S^n \times \{0,1\}) \cup (\{s_0\} \times I)$.
    \end{itemize}
    Consequently, to show surjectivity onto $V(\lambda')_-$, we consider variations with $\xi = 0$. In this case, $(d\pi_\bullet^\perp)_{(\lambda', x)}(0) = 0$ because this term is linear in $\xi$. Thus, the differential simplifies to
    $$
    dD_p(\dot{f},v_s,v_t,0) = (0, \pi_{\lambda'}^\perp(\dot{f}(s,t) + v_s\partial_sf(s,t) + v_t\partial_tf(s,t))).
    $$
    Now setting $\dot{f} = \beta v$, $v_s = v_t = 0$ gives $\pi_{\lambda'}^\perp (\dot{f}(s,t)) = \pi_{\lambda'}^\perp(v) = v$. Therefore the differential of $D$ at $p$ becomes
    $$
    dD_p(\dot{f},0,0,0) = (0,v).
    $$
    Since $v$ is arbitrary, we have $V(\lambda')_- \subset \operatorname{Im}(dD_p)$. Hence we compute
    $$
    dD_p(T_pM) + T_{(\lambda',0)}\mathcal{O}_W = \operatorname{Im}(dD_p) + (T_{\lambda'}(\mathrm{G}\cdot\lambda) \oplus \{0\}) = T_{\lambda'}(\mathrm{G}\cdot\lambda) \oplus V(\lambda')_- = T_{(\lambda',0)}W,
    $$
    which then implies that $D$ is transverse to $\mathcal{O}_W$ at $p$. And since $p$ was arbitrarily chosen, we conclude that $D$ is transverse to $\mathcal{O}_W$ globally.

Consider $X = S^n \times I \times \mathrm{G}\cdot \lambda$, and define
$$
D_f := D|_{\{f\} \times X} : X \to W, \quad (s, t, \lambda')\mapsto (\lambda', \pi_{\lambda'}^\perp(f(s,t))).
$$
Then the subset $\mathcal{F}^\pitchfork = \{f\in\mathcal{F} \mid D_f \pitchfork \mathcal{O}_W \}$ is residual in $\mathcal{F}$. Therefore $D_f$ is transverse to the zero section $\mathcal{O}_W$ for $f$ in a residual subset by Thom transversality theorem \cite[Theorem 2.1]{Hirsch1976}.
\end{proof}

\begin{cor}\label{cor 2.4}
    Let $f \in \mathcal{F}$ be a generic homotopy. If the rank $m = \operatorname{dim}_\mathbb{C}V(\lambda)_-$ of the vector bundle $W \to \mathrm{G}\cdot \lambda$ satisfies the condition
    \begin{equation}\label{eq: 3}
    n + 1 + 2 \operatorname{dim}_\mathbb{C}(\mathrm{G}\cdot\lambda) < 2m, \tag{3}
    \end{equation}
    then the preimage $D^{-1}_f(\mathcal{O}_W)$ is empty.
\end{cor}
\begin{proof}
    Assume the preimage $D_f^{-1}(\mathcal{O}_W)$ is non-empty. Take any point $x \in D_f^{-1}(\mathcal{O}_W) \subset X = S^n\times I \times (\mathrm{G}\cdot \lambda)$, and set $y = D_f(x) \in \mathcal{O}_W$. Because $D_f$ is transverse to $\mathcal{O}_W$, we have the equality of tangent spaces
    $$
    T_yW = dD_f(T_xX) + T_y\mathcal{O}_W.
    $$
    Hence
    $$
    \dim_\mathbb{R}T_yW = \dim_\mathbb{R}(dD_f(T_xX)) + \dim_\mathbb{R}T_y\mathcal{O}_W - \dim_\mathbb{R}(dD_f(T_xX)\cap T_y\mathcal{O}_W).
    $$
    
    Note that $\dim_\mathbb{R}T_yW = 2m + 2\dim_\mathbb{C}(\mathrm{G}\cdot \lambda)$, $\dim_\mathbb{R}T_y\mathcal{O}_W = 2\dim_\mathbb{C}(\mathrm{G}\cdot \lambda)$, and since the dimension of the intersection $dD_f(T_xX)\cap T_y\mathcal{O}_W$ is non-negative, we obtain the inequality
    $$
    2m \leq \dim_\mathbb{R}(dD_f(T_xX)) \leq \dim_\mathbb{R}T_xX = n + 1 + 2\dim_\mathbb{C}(\mathrm{G}\cdot \lambda),
    $$
    where the second inequality follows from the rank-nullity theorem as the differential $dD_f$ is a linear map.

    The original hypothesis of the corollary is the negation of the inequality obtained above, therefore we have established that $D_f^{-1}(\mathcal{O}_W)$ must be empty.
\end{proof}
$D^{-1}_f(\mathcal{O}_W)$ being empty for the chosen $\mathrm{G} \cdot \lambda$ means that for any point $(s,t)$ in the homotopy and any $\lambda' \in \mathrm{G}\cdot \lambda$, the point $f(s,t)$ is non-zero. Consequently, the Hilbert-Mumford weight $\mu(f(s,t),\lambda') > 0$, which implies that no $\lambda'$ from this particular class can be used to show that $f(s,t)$ is unstable or strictly semistable.

To ensure a homotopy lies entirely within the stable locus, this must hold for every class of 1-PS that can characterise non-stable points. Note that the Hesselink stratification of the non-stable locus $V^{us}(\rho) \cup (V^{ss}(\rho)\setminus V^{st}(\rho))$ is a finite union of strata, each associated with a particular conjugacy class of 1-PSs that can cause instability \cite{HoskinsStratAffine}.

Let $\{[\lambda_1],[\lambda_2],...,[\lambda_k] \}$ be the finite set of conjugacy classes of 1-PSs that characterise all possible ways a point can be $\rho$-unstable or strictly $\rho$-semistable. For each class $[\lambda_j]$, let $W_j$ be the corresponding vector bundle defined in \eqref{eq: 1} whose fibre rank $m_j = \operatorname{dim}_\mathbb{C}V(\lambda_j)_-$ satisfies inequality \eqref{eq: 3}. If the condition
$$
n + 1 + 2\operatorname{dim}_\mathbb{C}(\mathrm{G}\cdot \lambda_j) < 2m_j
$$
holds for each class $[\lambda_j]$ with $j = 1,...,k$, then Corollary \ref{cor 2.4} implies that there is a residual set $\mathcal{F}^\pitchfork_j\subset\mathcal{F}$ of homotopies $f$ such that $D^{-1}_f(\mathcal{O}_{W_j})$ is empty. The intersection $\bigcap^k_{j=1}\mathcal{F}^\pitchfork_j$ is also a residual subset of $\mathcal{F}$ because a countable intersection of residual sets is residual. Any homotopy $f$ in the intersection avoids every Hesselink stratum, which implies that its image lies entirely in the stable locus $V^{st}(\rho)$.

\begin{thm}\label{Thm 2.6}
    Define $d_{\min} = \min_{\substack{j \in \{1,\dots,k\}}} \{2m_j - 2\operatorname{dim}_\mathbb{C}(\mathrm{G}\cdot \lambda_j)\}$. Let $n$ be a non-negative integer such that $n+1 < d_{\min}$, then the $n$-th homotopy group of the stable locus is trivial, in other words,
    $$
    \pi_n(V^{st}(\rho)) = 0.
    $$
    Equivalently, $V^{st}(\rho)$ is $(d_{\min}-2)$-connected.
\end{thm}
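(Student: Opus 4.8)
The plan is to prove $\pi_n(V^{st}(\rho)) = 0$ for each $n$ with $n+1 < d_{min}$ in the standard way: show that every based map $f_0 : S^n \to V^{st}(\rho)$ with $f_0(s_0) = x_0$ extends to a based null-homotopy taking values entirely in $V^{st}(\rho)$. Granting this, the $(d_{min}-2)$-connectedness is just the collection of these vanishing statements for $0 \le n \le d_{min}-2$, the case $n=0$ giving path-connectedness. Since $V \cong \mathbb{C}^N$ is contractible, one first produces \emph{some} homotopy $f \in \mathcal{F}$ with $f(\cdot,0) = f_0$ — for instance the straight-line homotopy $f(s,t) = (1-t)f_0(s) + tx_0$, which automatically satisfies $f(s_0,\cdot)\equiv x_0$ and $f(\cdot,1)\equiv x_0$ — after, if necessary, replacing $f_0$ by a $C^r$-close representative, which is possible because $V^{st}(\rho)$, being the complement of a Zariski-closed set, is open in $V$.

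Next I would feed this $f$ into the genericity machinery already assembled. The hypothesis $n+1 < d_{min}$ unwinds to $n + 1 < 2m_j - 2\dim_{\mathbb C}(G\cdot\lambda_j)$, i.e.\ to inequality \eqref{eq: 3}, for every conjugacy class $[\lambda_j]$, $j = 1,\dots,k$. Hence Proposition 2.3 and Corollary 2.4 apply to each bundle $W_j$: the set $\mathcal F^\pitchfork_j = \{f\in\mathcal F \mid D_f \pitchfork \mathcal O_{W_j}\}$ is residual in $\mathcal F$, and since \eqref{eq: 3} holds, every $f$ in it has $D_f^{-1}(\mathcal O_{W_j}) = \emptyset$. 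Being a finite intersection of residual sets, $\mathcal F^\pitchfork := \bigcap_{j=1}^k \mathcal F^\pitchfork_j$ is residual, hence dense and non-empty by the Baire category theorem; choose any $f \in \mathcal F^\pitchfork$.

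For such an $f$, the discussion preceding the theorem already delivers the payoff: emptiness of $D_f^{-1}(\mathcal O_{W_j})$ says $\pi_{\lambda'}^\perp(f(s,t)) \neq 0$, equivalently $f(s,t)\notin V(\lambda')_+$, for all $(s,t)\in S^n\times I$ and all $\lambda'\in G\cdot\lambda_j$, so by the Hilbert--Mumford criterion no 1-PS in $[\lambda_j]$ destabilises $f(s,t)$. Since $\{[\lambda_1],\dots,[\lambda_k]\}$ exhausts the conjugacy classes indexing the Hesselink strata of $V^{us}(\rho)\cup(V^{ss}(\rho)\setminus V^{st}(\rho))$, the homotopy $f$ avoids every such stratum, whence $f(S^n\times I)\subset V^{st}(\rho)$. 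Thus $f$ is a based null-homotopy of $f_0$ inside $V^{st}(\rho)$, giving $\pi_n(V^{st}(\rho)) = 0$; ranging over $n$ with $n+1<d_{min}$ completes the proof.

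I expect the only genuine obstacle to lie in the two bookkeeping points underlying the first two paragraphs, not in any new estimate. First, the perturbation producing $f \in \mathcal F^\pitchfork$ must respect the boundary constraints $f(\cdot,0)=f_0$, $f(\cdot,1)\equiv x_0$, $f(s_0,\cdot)\equiv x_0$ that define $\mathcal F$, so one needs the \emph{relative} parametric transversality theorem together with a regularity class $C^r$ large enough that both the $C^0$-approximation of $f_0$ and the dimension count in Corollary 2.4 are legitimate — this is exactly what is being invoked when Proposition 2.3 asserts residuality inside $\mathcal F$, and it deserves an explicit word. Second, the chain ``$\pi^\perp_{\lambda'}(v)\neq 0$'' $\Rightarrow$ ``$v\notin V(\lambda')_+$'' $\Rightarrow$ ``$v$ is not destabilised by $[\lambda_j]$'' relies on the precise sign normalisation of the affine Hilbert--Mumford weight and on Hoskins' identification of the Hesselink strata with the negative-weight-space conditions encoded by the bundles $W_j$; I would record this as a short lemma citing \cite{HoskinsStratAffine}, so that the last step of the argument is genuinely a citation rather than an assertion.
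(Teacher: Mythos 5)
Your proof is correct and follows essentially the same route the paper takes: for each destabilising conjugacy class $[\lambda_j]$, Proposition 2.3 and Corollary 2.4 produce a residual set of homotopies avoiding the associated Hesselink stratum, the finite intersection is residual (hence non-empty by Baire), and any $f$ in it furnishes a based null-homotopy of $f_0$ inside $V^{st}(\rho)$. The two caveats you flag at the end — the relative parametric transversality needed to respect the boundary constraints on $\mathcal F$, and the sign normalisation linking ``nonzero negative-weight component'' to ``not destabilised via Hoskins' Hesselink strata'' — are genuine and the same ones the paper relies on implicitly; note in particular that the boundary values $f_0(s), x_0 \in V^{st}(\rho)$ cannot lie in $V(\lambda')_+$ for $\lambda'$ in a destabilising class (since $\langle\rho,\lambda'\rangle\le 0$), which is what makes the bump-function argument, supported away from the boundary, sufficient.
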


This theorem allows us to deduce the homotopy groups of the moduli space of stable orbits, $\mathcal{M}^{st}(\rho) := V^{st}(\rho) / \mathrm{G}$. Note that the principal $\mathrm{G}$-bundle $\mathrm{G}\to V^{st}(\rho) \to \mathcal{M}^{st}(\rho)$ induces a long exact sequence in homotopy groups
$$
\cdots \to \pi_n(\mathrm{G}) \to \pi_n(V^{st}(\rho)) \to \pi_n(\mathcal{M}^{st}(\rho)) \to \pi_{n-1}(\mathrm{G}) \to \pi_{n-1}(V^{st}(\rho)) \to \cdots,
$$
we have also assumed that $\pi_k(V^{st}(\rho)) = 0$ for $k \leq d_{\min} - 2$. Then if $k = 0$, which means $0 \leq d_{\min} - 2$, then $\pi_0(V^{st}(\rho)) = 0$. Thus we have $0 \to \pi_0(\mathcal{M}^{st}(\rho)) \to 0$, which then implies that $\pi_0(\mathcal{M}^{st}(\rho)) = 0$. Hence $\mathcal{M}^{st}(\rho)$ is path-connected.

For higher homotopy groups, consider $1\leq k < d_{\min} - 2$. We observe from the long exact sequence that
\begin{itemize}
    \item if $k \leq d_{\min} - 2$, then $\pi_k(V^{st}(\rho)) = 0$;
    \item if $k-1 \leq d_{\min}-3 < d_{\min}-2$, then $\pi_{k-1}(V^{st}(\rho)) = 0$. The corresponding segment of the long exact sequence $\pi_k(V^{st}(\rho)) \to \pi_k(\mathcal{M}^{st}(\rho)) \to \pi_{k-1}(\mathrm{G}) \to \pi_{k-1}(V^{st}(\rho))$ becomes $0 \to \pi_k(\mathcal{M}^{st}(\rho)) \to \pi_{k-1}(\mathrm{G}) \to 0$. The exactness implies that the connecting homomorphism $\partial_k : \pi_k((\mathcal{M}^{st}(\rho)) \to \pi_{k-1}(\mathrm{G})$ is an isomorphism.
\end{itemize}
This leads to the following corollary:
\begin{cor}\label{Cor 2.8}
    Assume $\mathrm{G}$ acts freely on $V^{st}(\rho)$ with quotient $\mathcal{M}^{st}(\rho) := V^{st}(\rho)/\mathrm{G}$, and let $d_{\min}$ be defined as in the preceding theorem. If $n$ is a non-negative integer such that $n+1 < d_{\min}$, then the homotopy groups of the quotient $\mathcal{M}^{st}(\rho)$ are related to the homotopy groups of $\mathrm{G}$ by an isomorphism
    $$
    \pi_n(\mathcal{M}^{st}(\rho)) \cong \pi_{n-1}(\mathrm{G})
    $$
    for $1 \leq n < d_{\min}-1$. Furthermore, if $d_{\min} > 1$, then $\mathcal{M}^{st}(\rho)$ is path-connected.
\end{cor}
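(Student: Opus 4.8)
The plan is to combine the connectivity bound of Theorem 2.5 with the long exact homotopy sequence of the principal $G$-bundle $G \to V^{st}(\rho) \to \mathcal{M}^{st}(\rho)$.

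First I would check that the quotient map $q : V^{st}(\rho) \to \mathcal{M}^{st}(\rho)$ is a smooth principal $G$-bundle, so that it is in particular a Serre fibration. By hypothesis $G$ acts freely on $V^{st}(\rho)$; since stable points have closed orbits, the restricted action on the stable locus is also proper, and a free proper action of a Lie group makes the orbit space a smooth manifold with $q$ a locally trivial principal $G$-bundle. Hence $q$ yields a long exact sequence
$$
\cdots \to \pi_n(G) \to \pi_n(V^{st}(\rho)) \xrightarrow{q_*} \pi_n(\mathcal{M}^{st}(\rho)) \xrightarrow{\partial} \pi_{n-1}(G) \to \pi_{n-1}(V^{st}(\rho)) \to \cdots,
$$
interpreted as an exact sequence of pointed sets in the lowest degrees.

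Next I would feed in the vanishing statement of Theorem 2.5, namely $\pi_k(V^{st}(\rho)) = 0$ for every $k \le d_{min} - 2$. If $1 \le n < d_{min} - 1$ then $n \le d_{min} - 2$ and $n - 1 \le d_{min} - 3 \le d_{min} - 2$, so both $\pi_n(V^{st}(\rho))$ and $\pi_{n-1}(V^{st}(\rho))$ vanish, and the four-term segment around $\pi_n(\mathcal{M}^{st}(\rho))$ collapses to
$$
0 \longrightarrow \pi_n(\mathcal{M}^{st}(\rho)) \xrightarrow{\ \partial\ } \pi_{n-1}(G) \longrightarrow 0;
$$
exactness forces $\partial$ to be a bijection. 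For $n \ge 2$ this is an isomorphism of abelian groups, and for $n = 1$ it remains a group isomorphism since the connecting map $\pi_1(\mathcal{M}^{st}(\rho)) \to \pi_0(G)$ of a principal $G$-bundle is a homomorphism. For the final assertion, if $d_{min} > 1$ then, $d_{min}$ being an even integer, $d_{min} - 2 \ge 0$, so $\pi_0(V^{st}(\rho)) = 0$ and $V^{st}(\rho)$ is path-connected; since $q$ is a continuous surjection, $\mathcal{M}^{st}(\rho)$ is path-connected as well.

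The main obstacle is the first step: the long exact sequence is only available once one knows that $q$ is at least a Serre fibration, which is not automatic for an arbitrary orbit space, and the argument hinges entirely on properness of the $G$-action on the stable locus — this is where the GIT hypotheses (closedness of stable orbits together with triviality of the stabilisers) are genuinely used. Granted that, the remainder is the routine diagram chase already outlined in the discussion preceding the corollary.
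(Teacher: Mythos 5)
Your proof is correct and follows the same route as the paper: invoke the long exact homotopy sequence of the principal $G$-bundle $G \to V^{st}(\rho) \to \mathcal{M}^{st}(\rho)$, use Theorem 2.5 to kill both $\pi_n(V^{st}(\rho))$ and $\pi_{n-1}(V^{st}(\rho))$ in the range $1 \leq n < d_{min}-1$, and read off the isomorphism from exactness. You are somewhat more careful than the paper in three places: you flag that the quotient map must be verified to be a Serre fibration (the paper takes the principal-bundle structure for granted), you note that the $n=1$ boundary case still gives a group isomorphism because the connecting map of a principal bundle is a homomorphism, and for the path-connectedness claim you use surjectivity of $q$ directly rather than the paper's slightly loose manipulation of the exact sequence of pointed sets in degree $0$; you also correctly observe that $d_{min}$ is automatically even, so $d_{min}>1$ forces $d_{min}\geq 2$. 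None of these changes the argument in substance.
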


\section{Applications}
\subsection{Stability and Controllability of Linear Control Systems}
In this example, we prove the following result.

\begin{lemma}
    Let $m, n \geq 1$ and assume $m\geq n$. Let
    $$
    V^c = \{(A, B)\in M_{n \times n}(\mathbb{C}) \times M_{n \times m}(\mathbb{C}) \mid (A, B)\text{ is controllable } \}.
    $$
    Then the controllable locus $V^c$ is $(2m - 2n)$-connected.
\end{lemma}

Fix integers $n, m \geq 1$, where $n$ denotes the dimension of the state space, and $m$ denotes the dimension of the input space. A linear control system with state space $\mathbb{C}^n$ and input space $\mathbb{C}^m$ is determined by a pair of matrices
$$
(A, B) \in M_{n \times n}(\mathbb{C})  \times M_{n \times m}(\mathbb{C}).
$$
We consider the affine space
$$
V := M_{n \times n}(\mathbb{C})  \times M_{n \times m}(\mathbb{C}),
$$
and the group $\mathrm{G} = \operatorname{GL}_n(\mathbb{C})$ acts on $V$ by change of basis in the state space, i.e. for any $(A, B) \in V$, we have $g\cdot (A, B) = (gAg^{-1} , gB)$ for all $g\in \mathrm{G}$.

The pair $(A, B)$ is said to be \emph{controllable} if and only if the \emph{controllability matrix}
$$
C(A, B) = [B, AB, A^2B, ...,A^{n-1}B] \in M_{n \times nm}(\mathbb{C})
$$
has rank $n$ \cite{Geiss2006ModuliQuivers}. Controllability is the fundamental property of a control system that ensures it can be driven from any initial state to any desired final state within a finite time.

Choose the character $\chi : \operatorname{GL}_n(\mathbb{C}) \to \mathbb{C}^\times$ defined by $\chi(g) = \operatorname{det}(g)$, and $V$ can be endowed with the $\chi$-linearisation, in other words, the $\mathrm{G}$-action can be lifted from $V$ to $V\times \mathbb{C}$ by defining $g \cdot (v, \zeta) := (g \cdot v, \chi(g)^{-1}\zeta)$ for any $(v, \zeta)\in V\times \mathbb{C}$. Then a point $v \in V$ is \emph{$\chi$-semistable} if
$$
\overline{\mathrm{G}\cdot(v, 1)} \cap V\times \{0\} = \emptyset.
$$
Further, a point $v\in V$ is \emph{$\chi$-stable} if it is $\chi$-semistable, its orbit $\mathrm{G}\cdot v$ is closed within $V^{ss} \subset V$, and its stabiliser subgroup $\mathrm{G}_v = \{g\in \mathrm{G} \mid g\cdot v = v\}$ is finite.

Proposition 4.1 in \cite{Geiss2006ModuliQuivers} gives the criterion that links the GIT stability with the controllability of linear control systems. A pair $(A, B)$ is stable in the sense of GIT if and only if it is controllable, which is equivalent to the condition that no proper $A$-invariant subspace contains $\operatorname{Im}B \subset \mathbb{C}^n$. This can be seen by considering the contrapositive. If $(A, B)$ is not controllable, there exists a proper $A$-invariant subspace $W\subset \mathbb{C}^n$ such that $\operatorname{Im}B \subseteq W$. If $r = \operatorname{dim}W$ with $1\leq r \leq n-1$, then we can choose a basis of $\mathbb{C}^n$ so that $W = \operatorname{Span}\{ e_1,...,e_r\}$, $W' = \operatorname{Span}\{e_{r+1},...,e_n\}$. In this adapted basis, the matrices $A$ and $B$ take the block form
$$
A = \begin{pmatrix}
    A_{11} & A_{12} \\ 0 & A_{22}
\end{pmatrix},
\quad
B=\begin{pmatrix}
    B_1 \\ 0
\end{pmatrix}
$$
where $A_{11} \in M_{r \times r}(\mathbb{C})$, $A_{12} \in M_{r \times (n-r)}(\mathbb{C})$, $B_1 \in M_{r \times m}(\mathbb{C})$, and the zero block in $B$ comes from $\operatorname{Im}B \subseteq W$.

Consider the one-parameter subgroup $\lambda:\mathbb{C}^\times \to \operatorname{GL}_n(\mathbb{C})$ with $\lambda(t) = \operatorname{diag}(1,...,1,\zeta^{-1},...,\zeta^{-1})$, where 1 appears $r$ times and $\zeta^{-1}$ appears $n-r$ times. Clearly $\lambda$ preserves the block decomposition. Then under conjugation, we have $\lambda(\zeta) \cdot (A, B) = (\lambda(\zeta)A\lambda(\zeta)^{-1}, \lambda(\zeta)B)$, where in blocks we yield
$$
\lambda(\zeta)A\lambda(\zeta)^{-1} = \begin{pmatrix}
    A_{11} & \zeta A_{12} \\ 0 & A_{22}
\end{pmatrix},
\quad
\lambda(\zeta)B = \begin{pmatrix}
    B_1 \\ 0
\end{pmatrix}.
$$
Then the limit $\operatorname{lim}_{\zeta\to 0}\lambda(\zeta)\cdot (A, B)$ exists and the numerical invariant
$$
\langle \chi, \lambda \rangle = -(n-r) < 0
$$
since $r < n$. Thus, by the Hilbert-Mumford criterion, $(A, B)$ is not $\chi$-semistable and thus not $\chi$-stable. Therefore, we can define the unstable locus associated to the subspace W as
$$
S_W := \{(A, B) \in V \mid A(W)\subset W, \operatorname{Im}B \subset W \}.
$$

Conversely, assume there is no proper linear subspace $W \subset \mathbb{C}^n$ such that $A(W) \subseteq W$ and $\operatorname{Im}B \subseteq W$. We show that $(A, B)$ is $\chi$-stable. By the Hilbert-Mumford criterion, it suffices to check the relevant numerical inequality for every one-parameter subgroup $\lambda: \mathbb{C}^\times \to \operatorname{GL}_n(\mathbb{C})$ for which the limit
$$
\lim_{\zeta \to 0}\lambda(\zeta) \cdot (A, B)
$$
exists in $V$, and then to verify that the stabiliser of $(A, B)$ is finite.

Let $\lambda$ be such a one-parameter subgroup. Since every one-parameter subgroup of $\operatorname{GL}_n(\mathbb{C})$ is diagonalisable, we may choose a basis $\{e_1,...,e_n\}$ of $\mathbb{C}^n$ such that $\lambda(\zeta)$ is diagonal with respect to this basis, i.e., $\lambda(\zeta)e_i = \zeta^{w_i}e_i$ for some integers $w_i$. Consider the matrix entries of $A$ and $B$ with respect to this basis. Under the action
$$
\lambda(\zeta) \cdot (A, B) = (\lambda(\zeta)A\lambda(\zeta)^{-1}, \lambda(\zeta)B),
$$
the entry $A_{ij}$, which corresponds to the component $e_j \mapsto e_i$, has weight $w_i - w_j$, while the $i$-th row of $B$ has weight $w_i$.

The existence of limit as $\zeta \to 0$ then implies that $A_{ij} = 0$ when $w_i - w_j < 0$ and that the $i$-th row of $B$ is zero when $w_i < 0$. Define
$$
V_{\geq 0} = \operatorname{Span}\{e_i \mid w_i\geq 0\} \subseteq \mathbb{C}^n.
$$
The condition on the rows of $B$ implies $\operatorname{Im}B \subseteq V_{\geq 0}$. Moreover, if $e_j \in V_{\geq 0}$, then $w_j \geq 0$. Any component of $Ae_j$ in a basis vector $e_i$ with $w_i < 0$ would have weight $w_i - w_j < 0$, and hence vanishes. Therefore, $A(V_{\geq 0}) \subseteq V_{\geq 0}$. Thus $V_{\geq 0}$ is an $A$-invariant subspace that contains $\operatorname{Im}B$. If $V_{\geq 0}$ were proper, it would contradict the assumption. Hence $V_{\geq 0} = \mathbb{C}^n$, and every weight satisfies $w_i \geq 0$.

For the character $\chi(g) = \det (g)$, we have $\chi(\lambda(\zeta)) = \det(\lambda(\zeta)) = \zeta^{w_1 + \cdots + w_n}$. Thus
$$
\langle \chi, \lambda\rangle = \sum_{i = 1}^n w_i \geq 0.
$$
Furthermore, if $\langle \chi, \lambda\rangle = 0$, then since all $w_i \geq 0$, we have $w_1 = \cdots = w_n = 0$. Hence $\lambda$ is the trivial 1-PS, and so every non-trivial 1-PS $\lambda$ for which the limit exists satisfies $\langle \chi, \lambda\rangle > 0$.

It remains to check that the stabiliser is finite. Suppose an element $g \in \mathrm{G}$ lies in the stabiliser, then $gAg^{-1} = A$ and $gB = B$. Note that the first equality is equivalent to $gA = Ag$, hence for every $j\geq 0$, we have $gA^jB = A^jgB = A^jB$, which means $g$ fixes every column of the controllability matrix $[B, AB, \ldots, A^{n-1}B]$. Since $(A, B)$ is controllable, this matrix has rank $n$, so its columns span $\mathbb{C}^n$. It follows that $g = I_n$, hence the stabiliser of $(A, B)$ is trivial. In particular, it is finite. Therefore, we conclude that $(A, B)$ is $\chi$-stable.

We have now proved that
$$
(A, B) \in V^{st}(\chi) \Longleftrightarrow (A, B) \text{ is controllable},
$$
and hence $V^{st}(\chi) = V^{c} = \{(A, B) \in V \mid (A, B)\text{ is controllable} \}$. Consequently, the unstable locus $V^{us}(\chi) = V \setminus V^{st}(\chi)$ is the locus of uncontrollable systems.

By the preceding discussion, a pair $(A, B) \in V$ is unstable if and only if there exists a proper subspace $W \subset \mathbb{C}^n$ such that $A(W) \subseteq W$ and $\operatorname{Im}B \subseteq W$. It remains to apply the transversality argument to the unstable locus. For $0 \leq r \leq n-1$, let $\lambda_r: \mathbb{C}^\times \to \operatorname{GL}_n(\mathbb{C})$ be the one-parameter subgroup
$$
\lambda_r(\zeta) = \operatorname{diag}(I_r, \zeta^{-1}I_{n-r}).
$$
Fix a decomposition $\mathbb{C}^n = W \oplus W'$, where $\dim W = r$ and $\dim W' = n - r$. For an arbitrary pair $(A, B) \in V$, let
$$
A = \begin{pmatrix}
    A_{11} & A_{12} \\ A_{21} & A_{22}
\end{pmatrix},
\quad
B=\begin{pmatrix}
    B_1 \\ B_2
\end{pmatrix},
$$
where $A_{21} : W \to W'$ and $B_2 : \mathbb{C}^m \to W'$. Unlike the previous situation, here we do not assume $A_{21} = 0$ or $B_2 = 0$. Observe that under the action of $\lambda_r(\zeta) = \operatorname{diag}(I_r, \zeta^{-1}I_{n-r})$, the components of $A_{21}$ and $B_2$ have negative weight. Hence
$$
V(\lambda_r)_- \cong \operatorname{Hom}(W, W')\oplus \operatorname{Hom}(\mathbb{C}^m, W').
$$
Therefore the complex dimension of the negative weight space is
$$
\dim_\mathbb{C}V(\lambda_r)_- = \dim_\mathbb{C}A_{21} + \dim_\mathbb{C}B_2 = r(n-r) + m(n-r) = (n-r)(r+m).
$$
On the other hand, the conjugation orbit of $\lambda_r$ is $\mathrm{G}\cdot \lambda_r \cong \mathrm{G}/C_\mathrm{G}(\lambda_r)$. For $0 < r < n$, one has
$$
C_\mathrm{G}(\lambda_r) \cong \operatorname{GL}_r(\mathbb{C}) \times \operatorname{GL}_{n-r}(\mathbb{C}).
$$
Hence
$$
\dim_\mathbb{C}(\mathrm{G}\cdot \lambda_r) = n^2 - r^2 - (n - r)^2 = 2r(n - r).
$$
In particular, if $r = 0$, then we would yield $\dim_\mathbb{C}(\mathrm{G}\cdot \lambda_r) = 0$.

Let $\mathcal{W}_r \to \mathrm{G}\cdot \lambda_r$ be the $\mathrm{G}$-equivariant holomorphic vector bundle whose fibre over $\mu\in \mathrm{G}\cdot \lambda_r$ is the negative weight space $V(\mu)_-$. Since all $\mu \in \mathrm{G}\cdot \lambda_r$ are conjugate to $\lambda_r$, the rank of this bundle is
$$
\operatorname{rank}_\mathbb{C}\mathcal{W}_r = \dim_\mathbb{C}V(\lambda_r)_- = (n - r)(r + m).
$$

Fix $k\geq 0$, and let $\mathcal{F}$ denote the space of homotopies $F: S^k \times I \to V$ with the aforementioned boundary conditions. As in Section 2, define the evaluation map $D_r : \mathcal{F} \times S^k \times I \times (\mathrm{G}\cdot \lambda_r) \to \mathcal{W}_r$ by
$$
D_r(F, s, t, \mu) = (\mu, \pi_\mu^\perp(F(s, t))),
$$
where the map $\pi_\mu^\perp: V \to V(\mu)_-$ denotes the projection onto the negative weight space for $\mu$. By Proposition \ref{Prop 2.4}, the map $D_r$ is transverse to the zero section $\mathcal{O}_{\mathcal{W}_r} \subset \mathcal{W}_r$. Therefore, for a generic homotopy $F$, the restriction $D_r|_F : S^k \times I \times (\mathrm{G}\cdot \lambda_r) \to \mathcal{W}_r$ is transverse to the zero section.

The condition $D_r(F,s,t,\mu) \in \mathcal{O}_{\mathcal{W}_r}$ is equivalent to $\pi_\mu^\perp(F(s,t)) = 0$, which means $F(s,t)\in \bigoplus_{i\geq 0}V_i(\mu)$. This is the condition for the limit $\lim_{\zeta\to 0}\mu(\zeta) \cdot F(s,t)$ to exist in $V$.
By Corollary \ref{cor 2.4}, the preimage $D_r|_F^{-1}(\mathcal{O_W})$ is empty when
$$
\operatorname{dim}_\mathbb{R}S^k + 1 + 2(\operatorname{dim}_\mathbb{C}(\mathrm{G}\cdot\lambda_r)) < 2\dim_\mathbb{C}V(\lambda_r)_-.
$$
Substituting the above dimensions gives
$$
k + 1 + 2\cdot 2r(n-r) < 2(n - r)(r + m).
$$
Hence we obtain $k + 1 < 2(n - r)(m - r)$. Recall that the unstable locus is obtained by allowing proper subspaces $W \subset \mathbb{C}^n$ which contain $\operatorname{Im} B$ and are preserved by $A$. In other words, it is determined by the possible dimensions $0 \leq r \leq n-1$ of such destabilising subspaces. Therefore, the minimum is
$$
d_{\min} = \min_{0 \leq r \leq n-1} 2(n-r)(m - r).
$$

Assume now that $m \geq n$. Then let $q = n - r$, we have $2(n-r)(m-r) = 2q(m-n+q)$ for $1 \leq q \leq n$. Since $m-n$ is non-negative, this expression achieves its minimum when $q = 1$, which is equivalent to when $r = n - 1$. Therefore,
$$
d_{\min} = 2(m - n + 1) = 2m - 2(n - 1).
$$
It follows from Corollary \ref{cor 2.4} that if $k + 1 < d_{\min} = 2m - 2(n - 1)$, then a generic homotopy $F$ can be chosen so that its image avoids the entire unstable locus $V^{us}$. Since $V$ is an affine space, it is contractible. Hence every map $S^k \to V^{st}(\chi)$ has a null-homotopy in $V$. Therefore
$$
\pi_k(V^{st}(\chi)) = 0 \text{ for all }k+1 < 2m - 2(n - 1).
$$
Since $V^{st}(\chi) = V^c$, we conclude that, for $m \geq n$, the controllable locus $V^c$ is $(2m - 2n)$-connected.

\subsection{Directed Gaussian Graphical Models and MLE}
We prove the following result in this example.
\begin{lemma}
    The moduli space of stable samples $\mathcal{M}^{st} = V^{st}/\mathrm{G}$ admits non-trivial families of stabilisations that are non-contractible, i.e. topologically distinct.
\end{lemma}
The work of Améndola et al. \cite{AmendolaKohnReichenbachSeigal2021} provides a novel connection between Maximum Likelihood Estimation (MLE) in statistics and the principles of Geometric Invariant Theory (GIT). Their idea is to translate statistical properties, such as the existence and uniqueness of the MLE, into the language of GIT stability by using the Kempf-Ness functional. They apply this to Gaussian group models, where maximising the log-likelihood is shown to be equivalent to a norm minimisation problem over a group orbit \cite[Proposition 3.4, Proposition 3.13]{AmendolaKohnReichenbachSeigal2021}. This relates the boundedness of the likelihood to semistability, the existence of an MLE to polystability, and the uniqueness of the MLE to stability \cite[Theorem 3.10, Theorem 3.15]{AmendolaKohnReichenbachSeigal2021}.

Building upon this framework, Derksen and Makam \cite{DerksenMakam2021} employed the machinery of quiver representations to solve the MLE threshold problem for matrix normal models, which is a key example of a Gaussian group model. They provided exact formulae for the number of samples required to ensure an MLE exists and is unique \cite[Theorem 1.2, Theorem 1.3]{DerksenMakam2021}, and thus proving a conjecture of Drton, Kuriki, and Hoff \cite[Corollary 1.4]{DerksenMakam2021}. Related work of Makam, Reichenbach, and Seigal \cite{makam2022symmetriesdirectedgaussiangraphical} studies Gaussian graphical models on directed acyclic graphs with symmetry constraints, namely RDAG (restricted directed acyclic graph) models, and characterises existence and uniqueness of the MLE via linear independence conditions. The following example applies this GIT perspective to a specific Directed Acyclic Graph (DAG) model, demonstrating the topological properties of the GIT-stable space of data samples for which a unique MLE exists.

We consider a directed Gaussian graphical model on a star-shaped Directed Acyclic Graph (DAG), which is a connected DAG that has a unique child vertex, which is a vertex that has a parent, meaning there is an edge pointing to it from another vertex. This graph has $k$ parent vertices $\{1,...,k \}$ and a single child vertex $k+1$, representing a linear regression problem \cite{bérczi2023completecollineationsmaximumlikelihood}.

Suppose that sample data are collected into a matrix $Y = [Y^{(1)}\cdots Y^{(k)} \mid Y^{(k+1)}]$ of size $n \times (k+1)$, where $n$ denotes the number of observations. The Maximum Likelihood Estimation (MLE) for the regression coefficients exists and unique if and only if the matrix formed by the parent columns, $[Y^{(1)} \cdots Y^{(k)}]$, has full column rank \cite[Theorem 3.5 (c)]{bérczi2023completecollineationsmaximumlikelihood}. We consider a degenerate sample $f = [Y^{(1)}\cdots Y^{(k)} \mid Y^{(k+1)}]$, where the parent block $X := [Y^{(1)} \cdots Y^{(k)}]$ is rank-deficient. Thus there are infinitely many regression coefficients $\beta$ satisfying $X^\top(Y^{(k+1)}-X\beta) = 0$ \cite[Section 3.2 \& Section 7]{bérczi2023completecollineationsmaximumlikelihood}. We refer to Section 5 of \cite{bérczi2023completecollineationsmaximumlikelihood} for the complete construction of the $f$-stabilisation, where Lemma 5.3 shows $\tilde{f}(\epsilon)$ has full column rank, hence lies in $V^{st}$ and has a unique MLE.

Let $V = M_{n\times (k+1)}(\mathbb{C})$ be the sample space, in which a point is a block $[Y^{(1)}\cdots Y^{(k)} \mid Y^{(k+1)}]$ with each column $Y^{(i)} \in \mathbb{C}^n$. Let the reductive group be $\mathrm{G} := \operatorname{GL}_k(\mathbb{C}) \times \mathbb{C}^\times$, which is embedded in $\operatorname{GL}_{k+1}(\mathbb{C})$ via
$$
g = (A, t) \mapsto \begin{pmatrix}
    A & 0 \\
    0 & t^{-1}
\end{pmatrix}.
$$
We define the right action of $\mathrm{G}$ on the sample space $V$ by matrix multiplication as $Y \mapsto Yg$. The character $\rho : \mathrm{G} \to \mathbb{C}^\times$ is then defined by $\rho(g) = t$. Note that the 1-PS $\lambda$ in $\mathrm{G}$ takes the general form $$
\lambda(t) = \operatorname{diag}(t^{w_1}, t^{w_2}, \ldots, t^{w_k}, t^{-\sum_{i=1}^k w_i}),
$$
where the integers $\{w_1, \ldots, w_k, -\sum_{i=1}^k w_i\}$ are the weights of the action.

There exist three equivalences between the notions of GIT stability and the existence and uniqueness of the MLE in the setting of DAG models \cite[Remark 3.6]{bérczi2023completecollineationsmaximumlikelihood}, which can be stated as follows
\begin{itemize}
    \item A sample $Y$ is unstable if no MLE exists;
    \item a sample $Y$ is polystable if an MLE exists, but possible not unique;
    \item a sample $Y$ is stable if the MLE is unique.
\end{itemize}

A 1-PS is destabilising if the limit $\lim_{t\to 0}Y\cdot \lambda(t)$ exists for a non-zero $Y$. The minimal destabilisation corresponds to making a single parent column redundant, namely the corresponding data of this parent column is zero for the limit to exist. We consider the specific 1-PS $\lambda_1$ defined as $\lambda_1(t) := \operatorname{diag}(t^{-1}, 1,\ldots, t)$, where the only negative weight is $-1$, this means that $\lambda_1(t)$ acts on the first parent column $Y^{(1)}$. The corresponding negative weight space is
$$
V(\lambda_1)_- = \{[Y^{(1)}0\cdots 0] \mid Y^{(1)} \in \mathbb{C}^n \}\cong \mathbb{C}^n,
$$
as each sample $Y$ has $n$ observations. Therefore, the dimension of the negative weight space is $\dim_\mathbb{C}V(\lambda_1)_- = n$.

Observe that the 1-PS $\lambda_1$ gives the weight decomposition as
$$
V_- = \operatorname{span}\{e_1\} , \quad V_0 = \operatorname{span}\{e_2,\ldots ,e_{k} \}, \quad V_+ = \operatorname{span}\{ e_{k+1} \},
$$
and any matrix commuting with $\lambda_1$ preserves each weight space. Let $g = (g_{ij})_{1\leq i, j\leq k+1}$ be an arbitrary element in $\mathrm{G}$, then the commutator relation $t^{w_i}g_{ij} = t^{w_j}g_{ij}$ for all $t\in \mathbb{C}^\times$ implies that the matrix $A$ is of the form
$$
A = \begin{pmatrix}
    a & 0\\
    0 & B
\end{pmatrix},
$$
where $a\in\mathbb{C}^\times$ and $B \in \operatorname{GL}_{k-1}(\mathbb{C})$. Hence the centraliser $C_\mathrm{G}(\lambda_1)$ is isomorphic to the product $\mathbb{C}^\times \times \operatorname{GL}_{k-1}(\mathbb{C}) \times \mathbb{C}^\times$. Thus we have $\dim_\mathbb{C}C_\mathrm{G}(\lambda_1) = 1 + (k-1)^2 + 1 = (k-1)^2 + 2$. Furthermore, since $\mathrm{G} = \operatorname{GL}_k(\mathbb{C}) \times \mathbb{C}^\times$, its dimension is the sum of the dimensions of the two Lie groups, namely $\dim_\mathbb{C}G=k^2+1$. Note that the quotient $\mathrm{G}/C_\mathrm{G}(\lambda_1) \cong \mathrm{G}\cdot \lambda_1$ is irreducible, it follows that $\dim_\mathbb{C}(\mathrm{G}\cdot \lambda_1) = \dim_\mathbb{C}G - \dim_\mathbb{C}C_\mathrm{G}(\lambda_1) = 2k - 2$.

Recall that Proposition \ref{Prop 2.4} establishes that $D_f: S^{\ell} \times I \times \mathrm{G}\cdot\lambda_1$ is transverse to the zero section $\mathcal{O}_W$ for generic homotopies $f$, where $W \cong (\mathrm{G}\cdot \lambda_1) \times \mathbb{C}^n$. Moreover, if $\ell + 1 + 2\dim_\mathbb{C}(\mathrm{G}\cdot\lambda_1) = \ell + 4k - 3 < 2n$, then Corollary 2.4 implies that $D_f^{-1}(\mathcal{O}_W)$ is empty, and so f can be deformed such that its image remains in the stable locus $V^{st}$.

By Theorem \ref{Thm 2.6}, we have $d_{\min} = 2\dim_\mathbb{C}V(\lambda_1)_- - 2\dim_\mathbb{C}(\mathrm{G} \cdot \lambda_1) = 2n-4k+4$. Thus to ensure $d_{\min} > 0$, one requires that $n > 2k - 2$. Consequently, we have $\pi_q(V^{st}) = 0$ for every $0\leq q <d_{\min} - 2$, which implies that $V^{st}$ is $(2n - 4k +2)$-connected. In particular, the stable locus $V^{st}$ is path-connected if $n \geq 2k - 1$, and it is simply connected if $n \geq 2k$. These results imply that the space of samples with a unique MLE is connected, which means that any sample with a unique MLE can be continuously deformed into any other such sample through a path consisting entirely of samples with unique MLEs. Furthermore, the algebraic parameter space $X_f$ of all $f$-stabilisations \cite[Definition 5.8]{bérczi2023completecollineationsmaximumlikelihood} lies in a connected ambient space and inherits the connectedness properties.

Consequently, observe from Corollary \ref{Cor 2.8}, we obtain $\pi_q(V^{st}/\mathrm{G}) \cong \pi_{q-1}(\mathrm{G})$ for all $1\leq q < d_{\min}-1$. Note also that $\mathrm{G} = \operatorname{GL}_k(\mathbb{C}) \times \mathbb{C}^\times$ is homotopy equivalent to $\operatorname{U}(k) \times S^1$, we can therefore conclude that for $1\leq q <d_{\min}-1$,
$$
\pi_q(V^{st}/\mathrm{G}) \cong \begin{cases}
    \mathbb{Z}\oplus \mathbb{Z} \quad q=2,\\
    \mathbb{Z} \quad q\text{ is even, } 4\leq q < 2k,\\
    0 \quad \text{otherwise}.
\end{cases}
$$
Therefore we have obtained the homotopy groups of the moduli space of stable samples.

In particular, $\pi_2(V^{st}/\mathrm{G}) \cong \mathbb{Z} \oplus \mathbb{Z}$ implies that there can be non-trivial families of $f$-stabilisations parametrised by a $2$-sphere, $S^2$, that cannot be continuously contracted to a single model within the moduli space. Moreover, the non-trivial homotopy group $\pi_q \cong \mathbb{Z}$ implies that for each even $q$ with $4\leq q < 2k$, one can construct a family of $f$-stabilisations parametrised by a $q$-sphere, $S^q$, in a non-trivial way, i.e. they are not null-homotopic. Therefore, while the $f$-stabilisation method of Bérczi et al. \cite{bérczi2023completecollineationsmaximumlikelihood} guarantees a solution with a unique MLE, the topological properties of the solution space are intricate. The existence of these non-trivial homotopy groups proves that there are entire families of such stabilisations, parametrised by spheres, which are topologically distinct and cannot be continuously deformed into one another.

From a statistical point of view, the stable locus $V^{st}$ is the space of samples for which the model has a unique MLE. Therefore, Theorem \ref{Thm 2.6} yields the following consequence.
\begin{thm}
    If the sample size is sufficiently large compared to the number of parameters so that the bound of Theorem \ref{Thm 2.6} applies, the locus of samples admitting a unique MLE is $(d_{\min} - 2)$-connected.
\end{thm}
Moreover, this example is also related to the stabilisation procedure of Derksen and Makam's quiver-based statistics work \cite{DerksenMakam2021} and to the $f$-stabilisations of Bérczi et al. \cite{bérczi2023completecollineationsmaximumlikelihood}, which produce perturbations of non-identifiable samples into the stable locus, where the MLE is unique.

\section{Applications to Quivers and Helmke Systems}\label{Helmke systems Section}
\subsection{Quivers and Helmke Systems} In this section, we apply our framework to the setting of Helmke systems discussed in \cite{Bader2008}. Helmke \cite{Helmke1993} generalises the classical equations of a linear dynamical system by introducing additional matrices $E\in M_{n \times n}(\mathbb{C})$ and $F \in M_{p \times p}(\mathbb{C})$, while keeping the classical system through the inclusion
$$
(A, B, C, D) \mapsto (I_n, A, B, C, D, I_p).
$$
After restricting to controllable systems and passing to the corresponding GIT quotient, the moduli space of controllable classical linear dynamical systems embeds as an open subset of a smooth projective compactification \cite{Bader2008}.

\begin{defn}\cite[Definition 3.1]{Bader2008}\label{Controllability Definition}
    A \emph{Helmke system} of type $(n, m, p)$ is a 6-tuple $H = (E, A, B, C, D, F)$, consisting of matrices $E, A \in M_{n \times n}(\mathbb{C})$, $B \in M_{n \times m}(\mathbb{C})$, $C \in M_{p \times n}(\mathbb{C})$, $D \in M_{p \times m}(\mathbb{C})$, $F \in M_{p \times p}(\mathbb{C})$.
\end{defn}

Let $\mathcal{H}_{n, m, p}$ denote the space of all Helmke systems, and $\mathrm{GL}_{n, n, p}$ denotes the product $\mathrm{GL}_n(\mathbb{C}) \times \mathrm{GL}_n(\mathbb{C}) \times \mathrm{GL}_p(\mathbb{C})$. The action of $\mathrm{GL}_{n, n, p}$ on $\mathcal{H}_{n, m, p}$ is defined as
$$
(g_0, g_1, g_2) \cdot (E, A, B, C, D, F) = (g_1Eg_0^{-1}, g_1Ag_0^{-1}, g_1B, g_2Cg_0^{-1}, g_2D, g_2F).
$$

\begin{defn}\cite[Definition 3.2]{Bader2008}
    A Helmke system $H = (E, A, B, C, D, F) \in \mathcal{H}_{n, m, p}$ is called \emph{controllable} if the following conditions are satisfied:
    \begin{enumerate}
        \item $\det(\alpha E + \beta A) \neq 0$ for some $(\alpha, \beta)\in \mathbb{C}^2$,
        \item $\operatorname{rank}\begin{pmatrix} \alpha E+\beta A & B \end{pmatrix}= n$ for all $(\alpha, \beta) \in \mathbb{C}^2\setminus \{0\}$,
        \item $\operatorname{rank}\begin{pmatrix} F & C & D \end{pmatrix} = p$.
    \end{enumerate}
\end{defn}

Consider the Helmke system of type $(2, 3, 1)$. Then we have the space of Helmke systems
$$
\mathcal{H}_{2, 3, 1} = M_{2\times 2}(\mathbb{C})^2 \oplus M_{2 \times 3}(\mathbb{C}) \oplus M_{1 \times 2}(\mathbb{C}) \oplus M_{1 \times 3}(\mathbb{C}) \oplus M_{1\times 1}(\mathbb{C}),
$$
which gives $\dim_\mathbb{C}\mathcal{H}_{2, 3, 1} = 20$. Moreover, we have the following diagram \cite[Remark 3.6]{Bader2008}.

\[\begin{tikzcd}
	{\bullet v_0} &&& {\bullet v_2} &&& {\circ\, v_4} \\
	\\
	\\
	{\bullet v_1} &&& {\circ\, v_3}
	\arrow["C", from=1-1, to=1-4]
	\arrow["E"', bend right=20, from=1-1, to=4-1]
	\arrow["A", bend left=20, from=1-1, to=4-1]
	\arrow["F"', from=1-7, to=1-4]
	\arrow["D"', from=4-4, to=1-4]
	\arrow["B", from=4-4, to=4-1],
\end{tikzcd}\]
where $v_0, v_1 \cong \mathbb{C}^2$, $v_2, v_4 \cong \mathbb{C}$, and $v_3 \cong \mathbb{C}^3$. Note that the reductive group is $\mathrm{GL}_{2,2,1}$, and its action on $\mathcal{H}_{2, 3, 1}$ is
$$
(g_0, g_1, g_2) \cdot (E, A, B, C, D, F) = (g_1Eg_0^{-1}, g_1Ag_0^{-1}, g_1B, g_2Cg_0^{-1}, g_2D, g_2F).
$$

Bader proves a general result of controllable Helmke systems of type $(n, m, p)$ \cite[Proposition 3.7]{Bader2008}. For Helmke systems of type $(n, m, p)$, we have the following:
\begin{prop}\label{Propo 3.7 Bader}
    Let $V = \mathcal{H}_{2, 3, 1}$ and $\mathrm{G} = \operatorname{GL}_2(\mathbb{C}) \times \operatorname{GL}_2(\mathbb{C}) \times \operatorname{GL}_1(\mathbb{C})$. Let $\chi = (r, s, t) \in \mathbb{Z}^3$ be a character of $\mathrm{G}$, written as
    $$
    \chi(g_0, g_1, g_2) = \det(g_0)^r\det(g_1)^s\det(g_2)^t.
    $$
    If $2r + s + t < 0$, $r + s > 0$, and $t > 0$, then for any Helmke system $H \in \mathcal{H}_{2, 3, 1}$, the following are equivalent:
    \begin{enumerate}
        \item $H$ is controllable,
        \item $H$ is $\chi$-stable,
        \item $H$ is $\chi$-semistable.
    \end{enumerate}
\end{prop}
\begin{proof}
    This is Bader's Proposition 3.7 applied to $n = 2$, $p = 1$, so that $\min\{p, n\} = 1$.
\end{proof}

\begin{rmk}
    The above proposition describes the open chamber in which controllability, stability, and semistability coincide. In our present example, however, we deliberately choose the boundary character $\rho = (-2, 3, 1)$, for which $2 \cdot (-2) + 3 + 1 = 0$. Hence $\rho$ lies on the boundary of the chamber in Proposition \ref{Propo 3.7 Bader}.

    Consequently, the above proposition does not apply to $\rho$, and one can no longer expect $\rho$-semistability and $\rho$-stability to coincide. This boundary choice is what allows strictly $\rho$-semistable Helmke systems to occur.
\end{rmk}

In the case where stability does not coincide with semistability and the parameters satisfy the inequalities, the next result shows that the controllable systems lie between the stable and semistable loci. 

\begin{prop}\label{Prop Controllability Boundary case}
    Let $n, m, p \geq 1$, and let $\mathcal{H}_{n, m, p}$ be the space of Helmke systems of type $(n, m, p)$. Then the reductive group is $\mathrm{G} = \mathrm{GL}_n(\mathbb{C}) \times \mathrm{GL}_n(\mathbb{C}) \times \mathrm{GL}_p(\mathbb{C})$. For a character $\chi = (r, s, t) \in \mathbb{Z}^3$, write $\chi(g_0, g_1, g_2) = \det(g_0)^r \det(g_1)^s \det(g_2)^t$.

    Assume that $nr + (n-1)s + \min \{p, n\}t = 0$, $r + s> 0$, and $t > 0$. Let $\mathcal{H}^c_{n, m, p} \subseteq \mathcal{H}_{n, m, p}$ denote the locus of controllable Helmke systems. Then
    $$
    \mathcal{H}^{st}_{n, m, p}(\chi) \subseteq \mathcal{H}^c_{n, m, p} \subseteq \mathcal{H}^{ss}_{n, m, p}(\chi).
    $$
\end{prop}
\begin{proof}
    Note that $\mathcal{H}_{n, m, p}$ is regarded as the representation space of the Helmke quiver. The set $M$ of marked vertices consists of $v_0, v_1 \cong \mathbb{C}^n$, and $v_2 \cong \mathbb{C}^p$, and its unmarked vertices are $v_3 \cong \mathbb{C}^m$, and $v_4 \cong \mathbb{C}^p$. Consider a subrepresentation $U = (U_0, U_1, U_2, U_3, U_4)$ such that
    $E(U_0) \subseteq U_1$, $A(U_0) \subseteq U_1$, $B(U_3) \subseteq U_1$, $C(U_0) \subseteq U_2$, $D(U_3) \subseteq U_2$, and $F(U_4) \subseteq U_2$. Write $u_i = \dim_\mathbb{C} U_i$, and for the character $\chi = (r, s, t)$ we define
    $$
    \langle \chi, U \rangle_M = ru_0 + su_1 + tu_2.
    $$
    Let $\mathrm{V} = (V_0, V_1, V_2, V_3, V_4)$ be the space that each vertex is of full dimension, i.e. $V_0, V_1 \cong \mathbb{C}^n$, $V_2, V_4 \cong \mathbb{C}^p$, and $V_3 \cong \mathbb{C}^m$.
    $$
    \langle \chi, \mathrm{V}\rangle_M = rn + sn + tp = n(r + s) + pt.
    $$

    A Helmke system $H \in \mathcal{H}_{n, m, p}$ is $\chi$-semistable if and only if the following two conditions hold \cite[Corollary 1.20]{Bader2008}:
    \begin{enumerate}
        \item $\langle \chi, U \rangle_M \geq 0$ for every subrepresentation $U$ with $U_3 = U_4 = 0$, and
        \item $\langle \chi, U \rangle_M \geq \langle \chi, \mathrm{V}\rangle_M$ for every subrepresentation $U$ with $U_3 = V_3$, $U_4 = V_4$.
    \end{enumerate}
    Moreover, $H$ is $\chi$-stable if and only if the first inequality is strict for every non-zero proper subrepresentations with $U_3 = U_4 = 0$, and the second inequality is strict for every proper subrepresentation with $U_3 = V_3$ and $U_4 = V_4$.
    
    Let $N \geq 2$ be an integer, and consider $\chi_N = (Nr, Ns, Nt - 1)$ such that
    $$
    \lim_{N \to \infty} \frac{1}{N}\chi_N = \lim_{N \to \infty} \bigl( r, s, t - \frac{1}{N}\bigr) = (r, s ,t) = \chi.
    $$
    By the condition $nr + (n-1)s + \min\{p, n\}t = 0$, we have
    $$
    n(Nr) + (n-1)(Ns) + \min\{p, n\}(Nt - 1) = -\min\{p, n\} < 0.
    $$
    Also $Nr + Ns = N(r + s) > 0$, and since $t \in \mathbb{Z}$ with $t > 0$, it follows that $Nt - 1 > 0$ for every $N \geq 2$. Hence $\chi_N$ lies in the chamber of Bader's Proposition 3.7.

    We first prove $\mathcal{H}^{st}_{n, m, p} (\chi) \subseteq \mathcal{H}^c_{n, m, p}$. Let $H \in \mathcal{H}^{st}_{n, m, p}(\chi)$, and let $U$ be a subrepresentation with $U_3 = U_4 = 0$. If $U = 0$, there is nothing to check in the first family. If $U \neq 0$, then $\chi$-stability gives $\langle \chi, U \rangle_M > 0$, consequently, this implies
    $$
    \langle \chi, U\rangle_M \geq 1.
    $$
    Note also that
    $$
    \langle \chi_N, U \rangle_M = Nru_0 + Nsu_1 + (Nt-1)u_2 = N\langle\chi, U\rangle_M - u_2.
    $$
    Since $0 \leq u_2 \leq p$, we get
    $$
    \langle \chi_N, U \rangle_M \geq N-p.
    $$
    Thus $\langle \chi_N, U \rangle_M > 0$ for $N > p$. Hence the strict $\chi_N$-stability inequality holds for all non-zero subrepresentations with $U_3 = U_4 = 0$.

    Now let $U$ be a proper subrepresentation with $U_3 = V_3$, and $U_4 = V_4$. Since $H$ is $\chi$-stable, we have $\langle \chi, U \rangle_M > \langle \chi, \mathrm{V}\rangle_M$. Note that both sides are integers, hence
    $$
    \langle \chi, U \rangle_M - \langle \chi, \mathrm{V}\rangle_M \geq 1.
    $$
    Moreover, $\langle \chi_N, \mathrm{V}\rangle_M = N\langle \chi, \mathrm{V}\rangle_M - p$ because the marked vertex $v_2$ has dimension $p$. Therefore,
    $$
    \begin{aligned}
        \langle \chi_N, U\rangle_M - \langle \chi_N, \mathrm{V}\rangle_M &= \bigl(N\langle \chi, U \rangle_M - u_2\bigr) - \bigl(N\langle \chi, \mathrm{V}\rangle_M - p\bigr)\\
        &= N\bigl(\langle \chi, U \rangle_M - \langle \chi, \mathrm{V}\rangle_M\bigr) + \bigl(p - u_2\bigr).
    \end{aligned}
    $$
    Since $\langle \chi, U \rangle_M - \langle \chi, \mathrm{V}\rangle_M \geq 1$ and $p - u_2 \geq 0$, it follows that
    $$
    \langle \chi_N, U \rangle_M - \langle \chi_N, \mathrm{V}\rangle_M > 0.
    $$
    Thus $\langle \chi_N, U \rangle_M > \langle \chi_N, \mathrm{V}\rangle_M$. Hence $H$ is $\chi_N$-stable for all sufficiently large $N$, and by Bader's Proposition 3.7, $H$ is controllable. Therefore, $\mathcal{H}^{st}_{n, m, p}(\chi) \subseteq \mathcal{H}^c_{n, m, p}$.

    We now prove the second inclusion $\mathcal{H}^c_{n, m, p} \subseteq \mathcal{H}^{ss}_{n, m, p}(\chi)$. Let $H \in \mathcal{H}^c_{n, m, p}$. Since $\chi_N$ lies in the chamber defined in Proposition 3.7 of \cite{Bader2008}, it follows that $H$ is $\chi_N$-semistable for every $N \geq 2$. Let $U$ be a subrepresentation with $U_3 = U_4 = 0$. Since $H$ is $\chi_N$-semistable, we have
    $$
    \langle \chi_N, U \rangle_M = N\langle\chi, U\rangle_M - u_2 \geq 0
    $$
    for every $N \geq 2$. Suppose, for contradiction, that $\langle \chi, U \rangle_M < 0$. Since $\langle \chi, U \rangle_M \in \mathbb{Z}$, this means $\langle \chi, U \rangle_M \leq -1$. Hence
    $$
    N\langle\chi, U\rangle_M - u_2 \leq -N-u_2 < 0,
    $$
    which contradicts $\chi_N$-semistability. Therefore,
    $$
    \langle \chi, U \rangle_M \geq 0.
    $$

    Now let $U_3 = V_3$, and $U_4 = V_4$. Similarly, we have, for the whole Helmke representation $\mathrm{V}$, that
    $$
    \langle \chi_N, \mathrm{V}\rangle_M = N\langle \chi, \mathrm{V}\rangle_M - p.
    $$
    Since $H$ is $\chi_N$-semistable, we have $\langle \chi_N, U\rangle_M \geq \langle\chi_N, \mathrm{V}\rangle_M$, which gives
    $$
    N(\langle \chi, U \rangle_M - \langle \chi, \mathrm{V}\rangle_M) + (p - u_2) \geq 0.
    $$
    Suppose, for contradiction, that $\langle \chi, U \rangle_M < \langle \chi, \mathrm{V}\rangle_M$. Then this inequality implies
    $$
    \langle \chi, U \rangle_M - \langle \chi, \mathrm{V}\rangle_M \leq -1.
    $$
    Therefore,
    $$
    N(\langle \chi, U \rangle_M - \langle \chi, \mathrm{V}\rangle_M) + (p - u_2) \leq -N + (p - u_2).
    $$
    For $N > p$, the right hand side is strictly negative, contradicting $\chi_N$-semistability. Thus $\langle \chi, U \rangle_M \geq \langle \chi, \mathrm{V}\rangle_M$. Since $H \in \mathcal{H}^c_{n, m, p}$ was arbitrary, this proves $\mathcal{H}^c_{n, m, p} \subseteq \mathcal{H}^{ss}_{n, m, p}(\chi)$.
\end{proof}

\begin{rmk}
    We note that the choice $\chi_N = (Nr, Ns, Nt-1) = N\cdot\chi - (0,0,1)$ is not unique. The idea of making such a choice is to perturb the boundary character $\chi$ into the chamber defined in Bader's Proposition 3.7. More generally, let $\delta = (a, b, c) \in \mathbb{Z}^3$ satisfy $na + (n-1)b + \min\{p, n\}c > 0$. Then the characters $\chi_N = N\chi - \delta$ satisfy $-\bigl(na + (n-1)b + \min\{p, n\}c \bigr) < 0$.

    Moreover, since $r+s > 0$ and $t > 0$, we have $N(r+s) - (a+b) > 0$ and $Nt - c > 0$ for all sufficiently large $N$. Hence $\chi_N$ lies in the chamber defined in Proposition \ref{Propo 3.7 Bader} if $N$ is sufficiently large. The proof of the proposition also works with this more general choice of $\chi_N$. In particular, for any subrepresentation $U$, we have
    $$
    \langle \chi_N, U \rangle_M = N\langle\chi, U \rangle_M - \langle \delta, U\rangle_M.
    $$
    The term $\langle \delta, U\rangle_M$ is uniformly bounded because the dimensions $u_0, u_1, u_2$ satisfy $0\leq u_0 \leq n$, $0\leq u_1 \leq n$, and $0\leq u_2\leq p$. Consequently, as $N \to \infty$, the leading term $N\langle \chi, U \rangle_M$ dominates the perturbation.
\end{rmk}

\begin{prop}
    Let $V= \mathcal{H}_{2, 3, 1}, \mathrm{G} = \mathrm{GL}_2(\mathbb{C})\times \mathrm{GL}_2(\mathbb{C}) \times \mathrm{GL}_1(\mathbb{C})$, and define the character
    $$
    \rho(g_0, g_1, g_2) = \det(g_0)^{-2}\det(g_1)^3\det(g_2).
    $$
    Let $M = \{v_0, v_1, v_2\}$ be the marked vertices. For a subrepresentation
    $$
    U = (U_0, U_1, U_2, U_3, U_4)
    $$
    write $u_i :=  \dim_\mathbb{C}U_i$ and define
    $$
    \langle\rho, U\rangle_M := -2u_0 + 3u_1 + u_2.
    $$
    Then $\langle \rho, \mathrm{V}\rangle_M = -2 \cdot 2 + 3\cdot 2 + 1 = 3$. A Helmke system $H \in V$ is $\rho$-semistable if and only if $\langle \rho, U\rangle_M \geq 0$ for every subrepresentation $U$ with $U_3 = U_4 = 0$, and $\langle \rho, U\rangle_M \geq 3$ for every subrepresentation $U$ with $U_3 = \mathbb{C}^3$ and $U_4 = \mathbb{C}$.

    Moreover, $H$ is $\rho$-stable if and only if these inequalities are strict for every non-zero proper subrepresentation in the first family, and strict for every proper subrepresentation in the second family.
\end{prop}
\begin{proof}
    This is Corollary 1.20 of \cite{Bader2008} applied to the marked set $M = \{v_0, v_1, v_2\}$. The corollary says that for quivers with marked vertices, one need only to test subrepresentations whose unmarked vertices are all zero or all full. In this example, the unmarked vertices are $v_3$ and $v_4$, with dimensions $3$ and $1$, so the two cases are
    $$
    (U_3, U_4) = (0, 0) \text{ or } (U_3, U_4) = (\mathbb{C}^3, \mathbb{C}).
    $$
    The first case gives the inequality $\langle \rho, U\rangle_M \geq 0$, and the second case gives $\langle \rho, U\rangle_M \geq \langle \rho, \mathrm{V}\rangle_M = 3$.

    The stable case is obtained by requiring strict inequality for non-zero proper subrepresentations, which is also stated in Corollary 1.20 of \cite{Bader2008}.
\end{proof}

By the proposition above, to obtain $\rho$-(semi)stability it is enough to consider proper non-zero subrepresentations $U = (U_0, U_1, U_2, U_3, U_4)$ for which either $U_3 = U_4 =0$ or $U_3 = \mathbb{C}^3, U_4 = \mathbb{C}$. Here $U_0, U_1 \subseteq \mathbb{C}^2$, $U_2, U_4 \subseteq \mathbb{C}$, $U_3 \subseteq \mathbb{C}^3$, and the subrepresentation conditions are
$$
E(U_0), A(U_0), B(U_3)\subseteq U_1,\quad C(U_0), D(U_3), F(U_4) \subseteq U_2.
$$

We now construct a strictly $\rho$-semistable point of the space $\mathcal{H}_{2, 3, 1}$. For this it is sufficient to produce a proper subrepresentation $U$ in the first family, namely with $U_3 = U_4 = 0$ such that $\langle \rho, U\rangle_M = 0$. Let $u_i:=\dim_\mathbb{C}U_i$. We can check that the only non-zero marked dimension vector satisfying $-2\cdot u_0 + 3\cdot u_1 + u_2 = 0$ is
$$
(u_0, u_1, u_2) = (2, 1, 1).
$$
Hence we look for a subrepresentation of the form
$$
U = (\mathbb{C}^2, L, \mathbb{C}, 0, 0),
$$
where $L \subseteq \mathbb{C}^2_{v_1}$ is a line. This requires
$$
E(U_0) \subseteq U_1, A(U_0)\subseteq U_1,
$$
and the condition $C(U_0)\subseteq U_2$ is automatic because $U_2 = \mathbb{C}$.

On the other hand, if there are subrepresentations having a non-zero subspace $U_0\subseteq \mathbb{C}^2$ with $U_1 = U_2 = 0$, then such a subrepresentation would satisfy $\langle\rho, U\rangle_M = -2u_0 < 0$, violating the semistability. Hence we need to ensure that a non-zero $U_0$ does not occur with $U_1 = U_2 = 0$, so we impose the condition $\ker E \cap \ker A = \{0\}$.

Secondly, we consider subrepresentations of the form $U = (U_0, U_1, U_2, \mathbb{C}^3, \mathbb{C})$. In this case, the arrows $B, D$, and $F$ impose the additional conditions
$$
B(\mathbb{C}^3) \subseteq U_1,\quad D(\mathbb{C}^3)\subseteq U_2,\quad F(\mathbb{C})\subseteq U_2.
$$
Consequently, to ensure the semistability inequality for every such subrepresentation, it is sufficient to choose the map $B$ surjective and the map $F$ non-zero. These two conditions imply that $U_1 = \mathbb{C}^2$ and $U_2 = \mathbb{C}$.

Let $(e_1, e_2)$ be a basis of $\mathbb{C}^2$ at the vertex $v_0$, and $(f_1, f_2)$ of $\mathbb{C}^2$ at the vertex $v_1$. Set $L = \operatorname{span}(f_2) \subset \mathbb{C}^2$ at $v_1$, and then choose $E, A : \mathbb{C}^2_{v_0} \to \mathbb{C}^2_{v_1}$ so that $\mathrm{im} E = \mathrm{im} A = L, \ker E = \mathrm{span}(e_2), \text{ and } \ker A = \mathrm{span}(e_1)$. Then we may take, with respect to the chosen bases, $E = \begin{pmatrix}
    0 & 0\\ 1 & 0
\end{pmatrix}, A = \begin{pmatrix}
    0 & 0\\ 0 & 1
\end{pmatrix}$. We further choose the map $B: \mathbb{C}^3 \to \mathbb{C}^2$ to be surjective, $C$ to be non-zero, $F: \mathbb{C} \to \mathbb{C}$ to be the identity map on $\mathbb{C}$, and $D$ arbitrary. Without loss of generality, we take
$$
B = \begin{pmatrix}
    1 & 0 & 0\\ 0 & 1 & 0
\end{pmatrix},\quad C = \begin{pmatrix}
    1 & 1
\end{pmatrix},\quad D = \begin{pmatrix}
    0 & 0 & 1
\end{pmatrix}.$$

\begin{lemma}\label{lemma 1}
    The resulting Helmke system $H_{ss} = (E, A, B, C, D, F)$ is strictly $\rho$-semistable. Consequently, the space $\mathcal{H}_{2, 3, 1}$ has non-empty strictly $\rho$-semistable locus under the action of $\mathrm{GL}_{2,2,1}$.
\end{lemma}
\begin{proof}[Proof of Lemma \ref{lemma 1}]
    Let $U = (U_0, U_1, U_2, 0, 0)$ be a proper non-zero subrepresentation. If $\dim_\mathbb{C} U_0 = 0$, then
    $$
    \langle\rho, U\rangle_M = 3\dim_\mathbb{C}U_1 + \dim_\mathbb{C}U_2 \geq 0.
    $$
    Now suppose $\dim_\mathbb{C} U_0 > 0$. Because $\ker E \cap \ker A = \{0\}$, there is no non-zero subspace $U_0$ on which both $E$ and $A$ vanish. Hence $\dim_\mathbb{C} U_1\geq 1$.

    If $\dim_\mathbb{C}U_0 = 1$, then
    $$
    \langle \rho, U\rangle_M = -2 + 3\dim_\mathbb{C}U_1 +\dim_\mathbb{C}U_2 \geq -2 + 3 = 1 > 0.
    $$
    If $\dim_\mathbb{C}U_0 = 2$, then $U_0 = \mathbb{C}^2$. Since $\operatorname{im} E = \operatorname{im} A = L$, any subspace $U_1$ that satisfies $E(U_0), A(U_0) \subseteq U_1$ contains $L$. Thus either $U_1 = L$ or $U_1 = \mathbb{C}^2$.
    
    If $\dim_\mathbb{C}U_1 = 1$, then $U_1 = L$. Since $C$ is non-zero and $U_0 = \mathbb{C}^2$, we have $C(U_0) = \mathbb{C}$, so $U_2 = \mathbb{C}$. Therefore
    $$
    \langle\rho, U\rangle_M = -4+3+1 = 0.
    $$
    If $\dim_\mathbb{C}U_1 = 2$, then
    $$
    \langle \rho, U\rangle_M = -4 + 6 + \dim_\mathbb{C}U_2 > 0.
    $$
    Hence every proper non-zero subrepresentation with vertices $v_3 = v_4 = 0$ satisfies the inequality $\langle \rho, U\rangle_M \geq 0$.

    Now we consider a subrepresentation of the second family, namely $U = (U_0, U_1, U_2, \mathbb{C}^3, \mathbb{C})$. Since $B$ is surjective and $F$ is non-zero, we have $U_1 = \mathbb{C}^2, U_2 = \mathbb{C}$. Thus
    $$
    \langle \rho, U\rangle_M = -2\dim_\mathbb{C}U_0 + 6 + 1 = 7 - 2\dim_\mathbb{C}U_0 \geq 3.
    $$

    Therefore, we conclude that $H_{ss}$ is $\rho$-semistable. It is not $\rho$-stable because there is a proper subrepresentation $U = (\mathbb{C}^2, L, \mathbb{C}, 0, 0)$ that define a subrepresentation satisfying $\langle \rho, U \rangle_M = -4 + 3 + 1 = 0$. Consequently, the $\rho$-semistability is strict.
\end{proof}

\begin{lemma}\label{lemma 2}
    The $\rho$-stable locus of the space $\mathcal{H}_{2, 3, 1}$ is non-empty.
\end{lemma}
\begin{proof}[Proof of Lemma \ref{lemma 2}]
    It is sufficient to construct one point for which the semistability inequalities $\langle \rho, U\rangle_M$ are all strict. From the preceding proof, the only possible equality case is $(\dim_\mathbb{C}U_0, \dim_\mathbb{C}U_1, \dim_\mathbb{C}U_2) = (2 ,1 ,1)$. This can be excluded by choosing $E$ invertible, which means $E(U_0) \subseteq U_1$. Thus $\dim_\mathbb{C}U_1 \geq \dim_\mathbb{C}U_0$. In particular, if $U_0 = \mathbb{C}^2$, then $U_1 = \mathbb{C}^2$.

    Choose $E = \begin{pmatrix}
        1 & 0\\ 0 & 1
    \end{pmatrix}$, let $A$ be the zero map, $B = \begin{pmatrix}
        1 & 0 & 0 \\ 0 & 1 & 0
    \end{pmatrix}, C = \begin{pmatrix}
        1 & 0
    \end{pmatrix}, D = \begin{pmatrix}
        0 & 0 & 1
    \end{pmatrix},$ and $F: \mathbb{C} \to \mathbb{C}$ is still assumed to be the identity map. We show that the Helmke system $H_{st}$ formed under these choices is stable.

    Let $U = (U_0, U_1, U_2, 0, 0)$ be a proper non-zero subrepresentation. If $\dim_\mathbb{C}U_0 = 0$, then $\langle \rho, U \rangle_M = 3\dim_\mathbb{C}U_1 + \dim_\mathbb{C}U_2 > 0$. If $\dim_\mathbb{C}U_0 = 1$, then the invertibility of $E$ would give $U_0 = E(U_0) \subseteq U_1$, which means $\dim_\mathbb{C}U_1\geq 1$. Thus $\langle \rho, U\rangle_M \geq -2 + 3 + \dim_\mathbb{C}U_2 \geq 1$. If $\dim_\mathbb{C}U_0 = 2$, then $U_0 = U_1 = \mathbb{C}^2$. Thus $\langle \rho, U \rangle_M = -4 + 6 + \dim_\mathbb{C}U_2 = 2 + \dim_\mathbb{C}U_2 > 0$.

    Now consider $U = (U_0, U_1, U_2, \mathbb{C}^3, \mathbb{C})$. Because the map $B$ is surjective and the map $F$ is not the zero map, we have $U_1 = \mathbb{C}^2, U_2 = \mathbb{C}$. So $\langle \rho, U\rangle_M = 7-2\dim_\mathbb{C}U_0$, and the properness of $U$ implies that $\dim_\mathbb{C} U_0 < 2$. Hence $\langle \rho, U\rangle_M > 3$.

    We have shown that all the inequalities are strict under the choice of the Helmke system $H_{st}$. As a consequence, the system $H_{st}$ is a stable point in the space $\mathcal{H}_{2, 3, 1}$, and thus the stable locus is not empty. 
\end{proof}

Recall that the reductive group is $\mathrm{G} = \mathrm{GL}_2(\mathbb{C}) \times \mathrm{GL}_2(\mathbb{C}) \times \mathrm{GL}_1(\mathbb{C})$, we consider a 1-PS $\lambda : \mathbb{C}^* \to \mathrm{GL}_2(\mathbb{C}) \times \mathrm{GL}_2(\mathbb{C}) \times \mathrm{GL}_1(\mathbb{C})$ of the form
$$
\lambda(t) = (\mathrm{diag}(t^{a_1}, t^{a_2}), \mathrm{diag}(t^{b_1}, t^{b_2}), t^c),
$$
where $a_i, b_j, c \in \mathbb{Z}$ with $i,j \in \{1, 2\}$. We may assume $a_1 \leq a_2, b_1 \leq b_2$ so that the same conjugacy class would not be counted twice.

Recall also that the affine space $V = \mathcal{H}_{2, 3, 1}$ that the group $\mathrm{G}$ acts on is
$$
\mathcal{H}_{2, 3, 1} = M_{2\times 2}(\mathbb{C})^2 \oplus M_{2 \times 3}(\mathbb{C}) \oplus M_{1 \times 2}(\mathbb{C}) \oplus M_{1 \times 3}(\mathbb{C}) \oplus M_{1\times 1}(\mathbb{C}),
$$
where the action is
$$
(g_0, g_1, g_2) \cdot (E, A, B, C, D, F) = (g_1Eg_0^{-1}, g_1Ag_0^{-1}, g_1B, g_2Cg_0^{-1}, g_2D, g_2F).
$$
To set up the transversality framework, we need to compute the complex dimension $m_\lambda = \dim_\mathbb{C}V(\lambda)_-$ of the negative space under this action, and the complex dimension $\dim_\mathbb{C}(\mathrm{G} \cdot \lambda)$ of the orbit. To apply Theorem \ref{Thm 2.6}, we need to find the minimum of the difference $2m - 2\dim_\mathbb{C}(\mathrm{G} \cdot \lambda)$ over all conjugacy classes $[\lambda_j]$. We treat the computations in two cases, one is when the unmarked vertices are $0$, and one is when the unmarked vertices have full dimension.

We begin with the family of subrepresentations for which the unmarked vertices are zero. For $E\in M_{2\times 2}(\mathbb{C})$, let $e_{ji} \in M_{2 \times 2}(\mathbb{C})$ be the matrix with a $1$ in row $j$, column $i$, and zero elsewhere. Since $E$ transforms by $E \mapsto g_1Eg_0^{-1}$, we obtain
$$
e_{ji} \mapsto \mathrm{diag}(t^{b_1}, t^{b_2})e_{ji}\mathrm{diag}(t^{-a_1}, t^{-a_2}) = t^{b_j - a_i}e_{ji}.
$$
Thus the weight of the basis vector $e_{ji}$ is $b_j - a_i$. The same calculation applies to $A$, since $A$ transforms in exactly the same way. Therefore, the contribution to the dimension of the negative weight space from $E, A$ is
$$
2 \cdot \#\{(j, i) \mid b_j - a_i < 0\}.
$$

Next consider $B\in M_{2\times 3}(\mathbb{C})$ with the action $B \mapsto g_1B$, let $b_{ji}$ be the matrix unit. Then
$$
b_{jl}\mapsto \mathrm{diag}(t^{b_1}, t^{b_2}) b_{jl} = t^{b_j}b_{jl}.
$$
Since there are $3$ columns, the contribution from $B$ is
$$
3 \cdot \#\{j \mid b_j < 0\}.
$$
Now consider $C \in M_{1\times 2}(\mathbb{C})$, which transforms by $C\mapsto g_2Cg_0^{-1}$. Its two basis vectors therefore have weights $c - a_1$ and $c - a_2$. Thus the contribution from $C$ is
$$
\#\{i \mid  c-a_i < 0 \}.
$$
Finally, $D \in M_{1 \times 3}(\mathbb{C})$ and $F \in M_{1 \times 1}(\mathbb{C})$ both transform by multiplication with $g_2$, so every basis vector in $D$ and $F$ has weight $c$. Since $D$ contributes three coordinates and $F$ contributes one, the contribution from $D$ and $F$ is $4$ if $c < 0$ and $0$ otherwise.

Therefore, collecting these terms, we obtain
$$
m_\lambda = 2 \cdot \#\{(j, i) \mid b_j - a_i < 0\} + 3 \cdot \#\{j \mid b_j < 0\} + \#\{i | c-a_i < 0 \} + \begin{cases}
    4, c < 0\\
    0, \text{otherwise}
\end{cases}.
$$

We next compute the orbit dimension $\dim_{\mathbb{C}}(\mathrm{G} \cdot \lambda)$ Note that $\dim_\mathbb{C}(\mathrm{G} \cdot \lambda) = \dim_\mathbb{C}\mathrm{G} - \dim_\mathbb{C}C_\mathrm{G}(\lambda)$, where $C_\mathrm{G}(\lambda)$ is the centraliser.

Because $\mathrm{G}$ is a direct product, conjugation acts independently on the three factors, i.e.
$$(g\cdot \lambda)(t) = (g_0\lambda_0(t)g_0^{-1}, g_1\lambda_1(t)g_1^{-1}, g_2\lambda_2(t)g_2^{-1}).
$$
Observe that $g_2 \in \mathrm{GL}_1(\mathbb{C})$, so $g_2\lambda_2(t)g_2^{-1}$ is the usual multiplication of non-zero complex numbers, which is commutative. Hence every element in $\mathrm{GL}_1(\mathbb{C})$ lies in the centre, i.e. $Z(\mathrm{GL}_1(\mathbb{C})) = \mathrm{GL}_1(\mathbb{C})$. Given that $\lambda_2(t) = t^c$, thus the $\mathrm{GL}_1(\mathbb{C})$-factor has no impact on the orbit dimension.

For the first $\mathrm{GL}_2(\mathbb{C})$-factor, we have $\lambda_0(t) = \mathrm{diag}(t^{a_1}, t^{a_2})$. If $a_1 = a_2$, then $\mathrm{diag}(t^{a_1}, t^{a_2}) = t^{a_1}I_2$, which commutes with every element of $\mathrm{GL}_2(\mathbb{C})$. If $a_1 < a_2$, let $N = \begin{pmatrix}
    x & y \\ z & w
\end{pmatrix} \in \mathrm{GL}_2(\mathbb{C})$, then $N\lambda_{0}(t) = \lambda_0(t)N$ implies that $y(t^{a_2} - t^{a_1}) = 0$, and $z(t^{a_1} - t^{a_2}) = 0$ for all $t \in \mathbb{C}^*$. Since $a_1 \neq a_2$, the two equalities holds if and only if $y = z= 0$. So the centraliser of this action is isomorphic to the torus $(\mathbb{C}^*)^2$. Therefore, we obtain the orbit dimension $\dim_\mathbb{C}(\mathrm{G} \cdot \lambda) = \dim_\mathbb{C}\mathrm{GL}_2(\mathbb{C}) - \dim_\mathbb{C}C_{\mathrm{GL}_2(\mathbb{C})}(\lambda_0) = 4 - 2 = 2$. A similar argument holds for the second $\mathrm{GL}_2(\mathbb{C})$-factor. Therefore, we conclude that
$$
\dim_\mathbb{C}(\mathrm{G}\cdot \lambda) = \begin{cases}
    0, a_1 = a_2\\
    2, a_1< a_2
\end{cases} + \begin{cases}
    0, b_1 = b_2\\
    2, b_1 < b_2
\end{cases}.
$$

For the family of the Helmke systems that have subrepresentations whose unmarked vertices are zero, destabilising or strictly semistable subrepresentations in this family have marked dimension triples satisfying $(\dim U_0, \dim U_1, \dim U_2)$ should satisfy $-2\dim U_0 + 3\dim U_1 + \dim U_2 \leq 0$. So we have the following table
\begin{table}[H]
\centering
\begin{tabular}{|c|c|c|c|}
\hline
 $(\dim U_0, \dim U_1, \dim U_2)$& $m_\lambda = \dim_\mathbb{C}V(\lambda)_-$ & $\dim_\mathbb{C}(\mathrm{G}\cdot \lambda)$ & $2m_\lambda - 2\dim_\mathbb{C}(\mathrm{G}\cdot \lambda)$ \\ \hline
$(1, 0, 0)$ & $5$ & $2$ & $6$ \\ \hline
$(1, 0, 1)$ & $4$ & $2$ & $4$ \\ \hline
$(2, 0, 0)$ & $10$ & $0$ & $20$ \\ \hline
$(2, 0, 1)$ & $8$ & $0$ & $16$ \\ \hline
$(2, 1, 0)$ & $6$ & $2$ & $8$ \\ \hline
$(2, 1, 1)$ & $4$ & $2$ & $4$ \\ \hline
\end{tabular}
\end{table}

Now we consider the case where unmarked vertices are of full dimension, namely those of the form $U = (U_0, U_1, U_2, \mathbb{C}^3, \mathbb{C})$. The subrepresentation conditions are then
$$
E(U_0)\subseteq U_1, A(U_0)\subseteq U_1, B(\mathbb{C}^3)\subseteq U_1, C(U_0) \subseteq U_2, D(\mathbb{C}^3) \subseteq U_2, F(\mathbb{C})\subseteq U_2.
$$
Concretely, we choose complements of $U_0$ and $U_1$ in the marked vertex spaces
$$
\mathbb{C}^2_{v_0} = U_0^\perp \oplus U_0,\quad \mathbb{C}^2_{v_1} = U_1^\perp \oplus U_1,
$$
with $\dim U_0^\perp = 2 - \dim U_0, \text{ and } \dim U_1^\perp = 2-\dim U_1$. Since $\dim\mathbb{C}_{v_2} = 1$, we have only two possibilities for $U_2$, namely $\dim U_2 = 0$ or $\dim U_2 = 1$.

With respect to these decompositions, the maps $E, A : \mathbb{C}^2_{v_0} \to \mathbb{C}_{v_1}^2$ may be written in block form as
$$
E = \begin{pmatrix}
    E_{00} & E_{01}\\
    E_{10} & E_{11}
\end{pmatrix},\quad A = \begin{pmatrix}
    A_{00} & A_{01}\\
    A_{10} & A_{11}
\end{pmatrix},
$$
where the column decomposition is $U_0^\perp \oplus U_0$, and the row decomposition is $U_1^\perp \oplus U_1$. Thus the block $E_{01}$ is the component $U_0 \to U_1^\perp$, and similarly for $A_{01}$.

Likewise, the map $B: \mathbb{C}^3 \to \mathbb{C}^2$ decomposes as
$$
B = \begin{pmatrix}
    B_0\\
    B_1
\end{pmatrix},
$$
where $B_0 : \mathbb{C}^3 \to U_1^\perp$, and the map $C : \mathbb{C}^2 \to \mathbb{C}$ decomposes as $C = \begin{pmatrix}
    C_0 & C_1
\end{pmatrix}$, where $C_1 : U_0 \to \mathbb{C}$.

The subrepresentation conditions of each arrow on each $U_i$ then give the total number of the vanishing entries of these matrices. In particular, we can define a 1-PS so that these entries are the elements in the negative weight space. Consequently, the dimension of the negative weight space coincides with the total number of the vanishing entries, thus
$$
m_\lambda = \dim_\mathbb{C}V(\lambda)_- = 2\dim U_0(2 - \dim U_1) + 3(2-\dim U_1) + \dim U_0(1-\dim U_2) + 4(1-\dim U_2).
$$

We now need to find such a 1-PS $\lambda: \mathbb{C}^* \to \mathrm{GL}_2(\mathbb{C}) \times \mathrm{GL}_2(\mathbb{C}) \times \mathrm{GL}_1(\mathbb{C})$, i.e. its entries in $E_{01}, A_{01}: U_0 \to U_1^\perp$ have negative weight, its entries in $B_0: \mathbb{C}^3 \to U_1^\perp$ have negative weight, its entries in $C_1: U_0 \to \mathbb{C}$ have negative weight given that $\dim U_2 = 0$, its entries in $D, F$ also have negative weight when $\dim U_2 = 0$. For $B \mapsto g_1B$, we want the rows mapped to $U_1^\perp$ have negative weight and the rows mapped to $U_1$ have non-negative weight, so without loss of generality we can choose weights on $U_1^\perp$ to be $-1$ and weights on $U_1$ to be $0$. Thus the second $\mathrm{GL}_2(\mathbb{C})$-factor of the 1-PS $\lambda$ becomes
$$
\operatorname{diag}\bigl(\underbrace{t^{-1},\ldots ,t^{-1}}_{(2-\dim U_1) \text{ terms}}, \underbrace{1, \ldots, 1}_{(\dim U_1) \text{ terms}} \bigr).
$$
For $E \mapsto g_1Eg_0^{-1}$ (a similar argument holds for $A$), we want the block $U_0 \to U_1^\perp$ to have negative weight, so the weight of $U_0$ should satisfy $-1 - \text{weight}(U_0) < 0$, then we can choose the weight of $U_0$ to be $0$. Thus the first $\mathrm{GL}_2(\mathbb{C})$-factor is
$$
\operatorname{diag}\bigl(\underbrace{t^{-1}, \ldots, t^{-1}}_{(2-\dim U_0) \text{ terms}}, \underbrace{1, \ldots, 1}_{(\dim U_0) \text{ terms}} \bigr).
$$
For the $\mathrm{GL}_1(\mathbb{C})$-factor, if $U_2 = \mathbb{C}$, then $C(\mathbb{C}) \subseteq \mathbb{C}$, which means no entries in $C, D, F$ have negative weight, so a natural choice of weight is $0$. If $U_2 = 0$, and we need $C_1: U_0 \to \mathbb{C}$ to have negative weight, and all entries of $D, F$ mapped to $U_2^\perp = \mathbb{C}$ also have negative weight. So we choose this factor to be $t^{\dim U_2 - 1}$.

Therefore, we obtain the expression
$$
\lambda(t) = \bigl(\operatorname{diag}\bigl(\underbrace{t^{-1},\ldots ,t^{-1}}_{(2-\dim U_0) \text{ terms}}, \underbrace{1, \ldots, 1}_{(\dim U_0) \text{ terms}} \bigr), \operatorname{diag}\bigl(\underbrace{t^{-1}, \ldots, t^{-1}}_{(2-\dim U_1) \text{ terms}}, \underbrace{1, \ldots, 1}_{(\dim U_1) \text{ terms}} \bigr), t^{\dim U_2 - 1} \bigr).
$$
Since $\dim U_0 \in \{0, 1, 2\}, \dim U_1 \in \{0, 1, 2\}$, and $\dim U_2\in \{0, 1\}$, it follows that
\begin{itemize}
    \item if $\dim U_0 = 0$, then the first factor is $t^{-1}I_2$,
    \item if $\dim U_0 = 1$, then the first factor is $\operatorname{diag}(t^{-1}, 1)$,
    \item if $\dim U_0 = 2$, then the first factor is $I_2$.
\end{itemize}
This same observation applies to the second $\mathrm{GL}_2$-factor. Hence the orbit dimension is
$$
\dim_\mathbb{C}(\mathrm{G} \cdot \lambda) = \begin{cases}
    2, \dim U_0 = 1\\
    0, \dim U_0 = 0, 2
\end{cases} + \begin{cases}
    2, \dim U_1= 1\\
    0, \dim U_1 = 0, 2
\end{cases}.
$$
Note that the possible triples $(\dim U_0, \dim U_1, \dim U_2)$ satisfy the inequality $-2\dim U_0 + 3\dim U_1 + \dim U_2 \leq 3$, thus we have the following table
\begin{table}[H]
\centering
\begin{tabular}{|c|c|c|c|}
\hline
 $(\dim U_0, \dim U_1, \dim U_2)$& $m_\lambda = \dim_\mathbb{C}V(\lambda_-)$ & $\dim_\mathbb{C}(\mathrm{G}\cdot \lambda)$ & $2m_\lambda - 2\dim_\mathbb{C}(\mathrm{G}\cdot \lambda)$ \\ \hline
$(0, 0, 0)$ & $10$ & $0$ & $20$ \\ \hline
$(0, 0, 1)$ & $6$ & $0$ & $12$ \\ \hline
$(0, 1, 0)$ & $7$ & $2$ & $10$ \\ \hline
$(1, 0, 0)$ & $15$ & $2$ & $26$ \\ \hline
$(1, 0, 1)$ & $10$ & $2$ & $16$ \\ \hline
$(1, 1, 0)$ & $10$ & $4$ & $12$ \\ \hline
$(1, 1, 1)$ & $5$ & $4$ & $2$ \\ \hline
$(2, 0, 0)$ & $20$ & $0$ & $40$ \\ \hline
$(2, 0, 1)$ & $14$ & $0$ & $28$ \\ \hline
$(2, 1, 0)$ & $13$ & $2$ & $22$ \\ \hline
$(2, 1, 1)$ & $7$ & $2$ & $10$ \\ \hline
$(2, 2, 0)$ & $6$ & $0$ & $12$ \\ \hline
\end{tabular}
\end{table}

Combining the two families, we obtain
$$
d_{\min} = \min(2m_\lambda - 2\dim_\mathbb{C}(\mathrm{G}\cdot \lambda)) = 2.
$$
Since Lemma \ref{lemma 2} shows that $V^{st}(\rho) \neq \varnothing$, $V^{st}(\rho)$ is $(2-2)$-connected means that $V^{st}(\rho)$ is path-connected.

\begin{rmk}
    Recall that Proposition \ref{Prop Controllability Boundary case} gives the inclusions
    $$
    \mathcal{H}^{st}_{n, m, p}(\chi) \subseteq \mathcal{H}^c_{n, m, p} \subseteq \mathcal{H}^{ss}_{n, m, p}(\chi)
    $$
    for characters $\chi = (r, s, t)$ satisfying $nr+(n-1)s + \min \{p, n\}t = 0$, $r + s > 0$, and $t > 0$.
    We now show, in the case $\rho = (-2, 3, 1)$ for Helmke systems of type $(2, 3, 1)$, that both inclusions can be strict.
    
    We begin by constructing a controllable Helmke system which is $\rho$-semistable but not $\rho$-stable. Let $v_0 = v_1 = \mathbb{C}^2$, $v_2 = v_4 = \mathbb{C}$, $v_3 = \mathbb{C}^3$, and consider the Helmke system $H = (E, A, B, C ,D ,F)$ given by
    $$
    E = \begin{pmatrix}
        0 & 1\\
        1 & 0
    \end{pmatrix},\quad
    A = \begin{pmatrix}
        0 & 0\\
        0 & 1
    \end{pmatrix},\quad
    B = \begin{pmatrix}
        1 & 0 & 0\\
        0 & 0 & 0
    \end{pmatrix},\quad
    C = \begin{pmatrix}
        1 & 0
    \end{pmatrix},\quad
    D = F = 0.
    $$
    We check that $H$ is controllable. For $(\alpha, \beta) \in \mathbb{C}^2$, we have
    $$
    \alpha E + \beta A = \begin{pmatrix}
        0 & \alpha\\
        \alpha & \beta
    \end{pmatrix},
    $$
    hence $\det (\alpha E + \beta A) = -\alpha^2$. Thus the first controllability condition in Definition 3.2 of \cite{Bader2008} holds. Moreover, if $\alpha \neq 0$ then the matrix $\alpha E + \beta A$ has rank $2$, and thus
    $$
    \operatorname{rank} \begin{pmatrix}
        \alpha E + \beta A & B
    \end{pmatrix}
    = 2.
    $$
    If $\alpha = 0$ and $\beta \neq 0$, then the matrix $\alpha E + \beta A$ has rank $1$. Therefore,
    $$
    \operatorname{rank}\begin{pmatrix}
        \alpha E + \beta A & B
    \end{pmatrix}
    = 2
    $$
    for every non-zero $(\alpha, \beta) \in \mathbb{C}^2$
    
    Finally,
    $$
    \operatorname{rank}\begin{pmatrix}
        F & C & D
    \end{pmatrix}
    =\operatorname{rank}\begin{pmatrix}
        0 & 1 & 0 & 0 &0 &0
    \end{pmatrix}
    = 1.
    $$
    Thus this Helmke system is controllable.
    
    We now show that the Helmke system is not $\rho$-stable. Let $L = \operatorname{span}(f_1) \subseteq V_1$, where $(f_1, f_2)$ is the standard basis of $V_1$. Since $\operatorname{im}B = L$, we have a subrepresentation
    $$
    U = (0, L, 0, \mathbb{C}^3, \mathbb{C}).
    $$
    Since the marked vertices are $v_0, v_1, v_2$, if follows that $\langle\rho, U\rangle_M = -2 \cdot 0 + 3\cdot 1 + 1\cdot 0 = 3 = \langle \chi, V\rangle_M$, so the subrepresentation is not $\chi$-stable. Since the Helmke system is controllable, Proposition \ref{Prop Controllability Boundary case} implies it is $\chi$-semistable. Hence the subrepresentation is strictly semistable and controllable.

    Moreover, the inclusion $\mathcal{H}^c_{2, 3, 1} \subseteq \mathcal{H}^{ss}_{2, 3, 1}$ is also strict. Recall the Helmke system $H_{ss}$ constructed in Lemma \ref{lemma 1}, where this Helmke system is shown to be strictly $\rho$-semistable, and in particular $H_{ss} \in \mathcal{H}^{ss}_{2, 3, 1}(\rho)$. We now check that $H_{ss}$ is not controllable. For every $(\alpha, \beta) \in \mathbb{C}^2$, we have
    $$
    \alpha E + \beta A = \begin{pmatrix}
    0 & 0\\
    \alpha & \beta
    \end{pmatrix},
    $$
    so the determinant $\det(\alpha E + \beta A) = 0$. Hence there is no pair $(\alpha, \beta)$ for which $\det (\alpha E + \beta A) \neq 0$. Thus $H_{ss} \in \mathcal{H}^{ss}_{2, 3, 1} \setminus H^c_{2, 3, 1}$, which proves the strict inclusion $\mathcal{H}^c_{2, 3, 1} \subsetneq \mathcal{H}^{ss}_{2, 3, 1}$.
\end{rmk}

Motivated by Proposition 1.22 in \cite{Bader2008}, we now introduce the notion of observability for Helmke systems. Recall that for a classical linear dynamical system $(A, B, C, D)$, unobservability means there exists $0 \neq U \subseteq \mathbb{C}^n$ such that $A(U) \subseteq U$, and $C(U) = 0$. In the quiver representing the Helmke system $(E, A , B, C, D, F)$, we have $v_0, v_1 \simeq \mathbb{C}^n$ with $E, A: v_0 \to v_1$. This means we need a pair of subspaces $U_0 \subseteq\mathbb{C}^n_{v_0}, U_1 \subseteq \mathbb{C}^n_{v_1}$ satisfying $E(U_0) + A(U_0) \subseteq U_1$ and $C(U_0) = 0$. The dimension condition is $\dim U_1 \leq \dim U_0$. We thus have the following definition

\begin{defn}\label{Helmke-Observability Defn}
    A Helmke system $H = (E, A, B, C, D, F)$ is \emph{observable} if there does not exist a pair $0\neq U_0\subseteq \mathbb{C}^n, U_1\subseteq\mathbb{C}^n$ such that
    $$
    E(U_0) + A(U_0) \subseteq U_1,\quad C(U_0) = 0,\quad \text{ and }\dim U_1 \leq \dim U_0.
    $$
\end{defn}

\begin{prop}
    The conditions in the above definition are equivalent to the nonexistence of a subrepresentation of the form
    $$
    (U_0, U_1, 0, 0, 0)
    $$
    with $0\neq U_0$ and $\dim U_1\leq \dim U_0$.
\end{prop}
\begin{proof}
    Suppose such a pair $(U_0, U_1)$ exists and $\dim U_0 > 0$. Then $\alpha E\mid_{U_0} +\beta A\mid_{U_0} : U_0 \to U_1$ is a linear map for some $(\alpha, \beta) \in \mathbb{C}^2$. If $\dim U_1< \dim U_0$, then the linear map has nontrivial kernel. If $\dim U_1 = \dim U_0$, then the determinant $\det(\alpha E\mid_{U_0} +\beta A\mid_{U_1})$ is a homogeneous polynomial of positive degree, hence has a zero over $\mathbb{C}$. Therefore, there exists some non-zero $(\alpha, \beta)$ and some non-zero $x \in U_0$ such that $(\alpha E + \beta A)x = 0$. Moreover, $C(U_0) = 0$ means that $Cx = 0$ for all $x \in U_0$, hence the matrix
    $$
    \begin{pmatrix}
        \alpha E + \beta A\\
        C
    \end{pmatrix} x = 0.
    $$
    Therefore, the rank
    $$
    \mathrm{rk}\begin{pmatrix}
        \alpha E + \beta A\\
        C
    \end{pmatrix} < n.
    $$

    Conversely, now let $\mathrm{rk}\begin{pmatrix}
        \alpha E + \beta A\\
        C
    \end{pmatrix} < n$ for some $(\alpha, \beta) \neq (0, 0)$, and choose a non-zero $x \in \ker(\alpha E + \beta A) \cap \ker C$. Let $U_0 = \mathbb{C}x$ and $U_1 = E(U_0) + A(U_0)$. Since $(\alpha E + \beta A)x = 0$, it follows that $Ex$ and $Ax$ are linearly dependent. Thus $\dim U_1 \leq 1 = \dim U_0$, which gives, together with $C(U_0)= 0$, the subrepresentations of the form $(U_0, U_1, 0, 0, 0)$.
\end{proof}

\begin{rmk}
Lemma 3.3 of \cite{Bader2008} states that there is a closed and equivariant embedding
$$
\phi_H: \Sigma_{n, m, p} \to \mathcal{H}_{n, m, p}
$$
given by $(A, B, C, D) \mapsto (I_n, A, B, C, D, I_p)$, where $\Sigma_{n, m, p}$ denotes the space of classical linear dynamical systems and $I_n, I_p$ denote the identity matrices. We observe that the conditions for Helmke observability become
$$
I_n(U_0) + A(U_0) = U_0 + A(U_0) \subseteq U_1,\quad C(U_0) = 0,\quad \text{ and }\dim U_1 \leq \dim U_0,
$$
which imply that $\dim U_1 = \dim U_0$ i.e. $U_0 = U_1$. Hence the conditions for Helmke observability coincide with the observability of classical linear dynamical systems in this case.

More generally, if matrices $E, F$ are invertible, i.e. $E\in \mathrm{GL}_n(\mathbb{C})$ and $F \in \mathrm{GL}_p(\mathbb{C})$, then the Helmke system can be rewritten as
$$
x_{k+1} = E^{-1}Ax_k + E^{-1}Bu_k,\quad y_k = F^{-1}Cx_k + F^{-1}Du_k.
$$
So for a Helmke system $H = (E, A, B, C, D, F)$ of type $(n, m, p)$, we have the corresponding classical linear dynamical system
\begin{equation*}
    \Sigma_H = (E^{-1}A, E^{-1}B, F^{-1}C, F^{-1}D). \qedhere
\end{equation*}
\end{rmk}

Now we want to interpret the notion of observability in terms of the defining equations of Helmke systems, which are
\begin{align}\label{Helmke Defining eqns}
    Ex_{k+1} = Ax_k + Bu_k,\quad Fy_k = Cx_k + Du_k,
\end{align}
where $u_k$ is the input, $y_k$ is the output, and $x_k$ is the state vector. Let us start by recalling the definition of observability for classical linear dynamical systems.
\begin{defn}
    A finite-dimensional dynamical system is called \emph{unobservable} over a time interval $[0, t]$ if distinct initial states cannot be distinguished by their input and output on $[0, t]$. A dynamical system is \emph{observable} if it is not unobservable.
\end{defn}

For Helmke systems, however, $E$ and $F$ may be singular. So a prescribed initial state and input sequence may not determine a unique state or output trajectory, i.e. $x_{k + 1}$ and $y_k$ need not be determined uniquely. Following the terminology of Polderman and Willems \cite{Polderman1998}, we therefore work with admissible trajectories, namely trajectories satisfying the defining equations of the systems.

\begin{defn}
    A state trajectory is called \emph{admissible} if it satisfies the defining equations throughout the entire time interval $[0, t]$. For an integer $T \in [0, t]$, an \emph{admissible triple} consists of $(x, u, y) =  \bigl( (x_k)_{k = 0}^T, (u_k)_{k = 0}^T, (y_k)_{k = 0}^T\bigr)$ satisfying equations (\ref{Helmke Defining eqns}).
\end{defn}

Let $(x^q,u, y^q)$ and $(x^0, u, y^0)$ be two admissible triples. Let $z_k := x^q_k - x^0_k$ and $w_k := y^q_k - y^0_k$. Subtracting the two equations (\ref{Helmke Defining eqns}) gives
$$
Ez_{k+1} = Az_k,\quad Fw_k = Cz_k.
$$
If the two pairs have the same output, then $w_k = 0$, and hence $Cz_k = 0$. Conversely, if a sequence $z = (z_k)$ satisfies $Ez_{k+1} = Az_k$ and $Cz_k = 0$, then $(z, 0, 0)$ is a zero output triple. Thus it can be added to any admissible triple to produce another admissible triple with the same input and the same output.

Therefore, we have the following definition
\begin{defn}\label{Helmke-observability Definition}
    A non-zero state $x^q_0 = q \in V_0$ of a Helmke system is called \emph{Helmke-unobservable} over the discrete time interval $\{0,\ldots, T\}$ if there exists a trajectory $z = (z_i)_{i = 0}^{T+1}$ with $z_0 = q$ such that $Ez_{k+1} = Az_k$ and $Cz_k = 0$ for $k = 0,...,T$. A Helmke system is called \emph{Helmke-observable} if it has no non-zero Helmke-unobservable states over $\{0,\ldots, T\}$.
\end{defn}

\begin{prop}\label{stability and  controllability and observability}
    Let $n, m, p \geq 1$, let $\mathcal{H}_{n, m, p}$ be the space of Helmke systems of type $(n, m, p)$, and let $\mathrm{G} = \mathrm{GL}_n(\mathbb{C}) \times \mathrm{GL}_n(\mathbb{C}) \times \mathrm{GL}_p(\mathbb{C})$. Let $\chi = (r, s, t) \in \mathbb{Z}^3$ be a character of $\mathrm{G}$, which is defined as
    $$
    \chi(g_0, g_1, g_2) = \det(g_0)^r\det(g_1)^s\det(g_2)^t.
    $$

    Assume $r + s = 0$, $r < 0$, $t > 0$, and $nr + (n-1)s + \min\{p, n\}t<0$. Then for a Helmke system $H = (E, A, B, C, D, F) \in \mathcal{H}_{n, m, p}$, the following are equivalent
    \begin{enumerate}
        \item $H$ is $\chi$-stable.
        \item $H$ is controllable and Helmke-observable.
    \end{enumerate}
\end{prop}
\begin{proof}
    Define
    $$
    \chi_N = (Nr, Ns + 1, Nt).
    $$
    Then $Nr + Ns+1 = N(r + s) + 1 = 1 > 0$, and $Nt > 0$ for all $N > 0$. Moreover,
    $$
    n\cdot Nr + (n-1)\cdot (Ns + 1) + \min\{p, n\}\cdot Nt = N(nr + (n-1)s + \min\{p, n\}t) + (n-1) < 0
    $$
    for sufficiently large $N$ because $nr + (n-1)s + \min\{p, n\}t < 0$. Consequently, $\chi_N$-stability is equivalent to controllability.

    Suppose $H$ is $\chi$-stable. Let $U = (U_0,U_1, U_2, U_3, U_4)$ be a subrepresentation that has either $U_3 = U_4 = 0$ or $U_3 = V_3$, $U_4 = V_4$. If $U_3 = U_4 = 0$, then $\chi$-stability gives $\langle \chi, U \rangle_M \geq 1$, which means $\langle\chi_N, U\rangle_M = N\langle\chi, U\rangle_M + \dim U_1 > 0$ for sufficiently large $N$. If $U_3 = V_3$ and $U_4 = V_4$, then $\chi$-stability gives $\langle\chi, U\rangle_M - \langle\chi, \mathrm{V}\rangle_M \geq 1$. Since $\langle \chi_N, U\rangle_M - \langle\chi_N, \mathrm{V}\rangle_M = N(\langle\chi, U\rangle_M - \langle\chi, \mathrm{V}\rangle_M) + (\dim U_1 - n)$, the right hand side is positive for sufficiently large $N$. Hence $H$ is $\chi_N$-stable when $N$ is sufficiently large, and thus controllable.

    Now show $H$ is Helmke-observable. If $H$ were not observable, then by Definition \ref{Helmke-Observability Defn}, there would exist a subrepresentation $(U_0, U_1, 0, 0, 0)$ with $0 \neq U_0$, $\dim U_1 \leq \dim U_0$. Since $r + s = 0$, we have $s = -r > 0$, and thus
    $$
    \langle \chi, U \rangle_M = r\dim U_0 + s\dim U_1 = r(\dim U_0 - \dim U_1) \leq 0,
    $$
    contradicting $\chi$-stability. Therefore, $H$ is Helmke-observable.

    Now suppose $H$ is controllable and Helmke-observable. Consider a non-zero subrepresentation
    $$
    U = (U_0, U_1, U_2, 0, 0).
    $$
    If $U_0 = 0$, then $\langle \chi, U \rangle_M = s\dim U_1 + t\dim U_2$, which is positive whenever $U \neq 0$. Now let $U_0 \neq 0$. Since $H$ is controllable, by definition we have $\det(\alpha E + \beta A) \neq 0$ for some $(\alpha, \beta)$. If $\dim U_1 < \dim U_0$, then every matrix $\alpha E + \beta A$ maps $U_0$ into $U_1$ of smaller dimension, which would fail the injectivity on $U_0$, contradicting the invertibility of some $\alpha E + \beta A$. Therefore one cannot have $\dim U_1 < \dim U_0$ for a subrepresentation with $E(U_0) + A(U_0) \subseteq U_1$. Thus $\dim U_1 \geq \dim U_0$.

    Note that
    $$
    \langle \chi, U\rangle_M = r\dim U_0 + s\dim U_1 + t\dim U_2 = s(\dim U_1 - \dim U_0) + t\dim U_2.
    $$
    If $\dim U_1 > \dim U_0$, this is positive. If $\dim U_1 = \dim U_0$ and $\dim U_2 >0$, this is also positive. If $\dim U_1 = \dim U_0$ and $\dim U_2 = 0$, then this would be of the form $(U_0, U_1, 0, 0, 0)$, which is a Helmke-unobservable subrepresentation by Definition \ref{Helmke-Observability Defn}. Hence
    $$
    \langle\chi, U\rangle_M > 0
    $$
    for every $U = (U_0, U_1, U_2, 0, 0)$.

    Now consider a subrepresentation
    $$
    U = (U_0, U_1, U_2, V_3, V_4).
    $$
    Because $H$ is controllable, it is $\chi_N$-stable for all sufficiently large $N$ \cite[Proposition 3.7]{Bader2008}. Therefore $\langle\chi_N, U\rangle_M - \langle \chi_N, \mathrm{V} \rangle_M > 0$. But
    \begin{align}\label{eqn}
        \langle\chi_N, U\rangle_M - \langle \chi_N, \mathrm{V} \rangle_M = N(\langle\chi, U\rangle_M - \langle\chi, \mathrm{V}\rangle_M) + (\dim U_1 - n).
    \end{align}
    Since $U_1 \subseteq V_1$, $\dim U_1 - n \leq 0$. If $\langle\chi, U\rangle_M - \langle\chi, \mathrm{V}\rangle_M = 0$, then the right hand side of \ref{eqn} becomes $\dim U_1 - n \leq 0$, contradicting $\chi_N$-stability. If $\langle\chi, U\rangle_M - \langle\chi, \mathrm{V}\rangle_M < 0$, then for sufficiently large $N$,
    \begin{align}\label{eqn2}
        N(\langle\chi, U\rangle_M - \langle\chi, \mathrm{V}\rangle_M) + (\dim U_1 - n) \leq -N + (\dim U_1 - n) \leq -N \leq 0,
    \end{align}
    which again contradicts $\chi_N$-stability. Hence $\langle\chi, U\rangle_M - \langle\chi, \mathrm{V}\rangle_M > 0$, i.e. $\langle\chi, U\rangle_M > \langle\chi, \mathrm{V}\rangle_M$.

    Combining both cases of subrepresentations, we have proved that $H$ is $\chi$-stable.
\end{proof}

\begin{rmk}
    It is worth giving a more explicit description on when $N$ is considered to be sufficiently large. The main idea is that we want the character $\chi_N$ to lie in the chamber given in Proposition 3.7 of \cite{Bader2008}. This is the case when
    $$
    N(nr + (n-1)s + \min\{p, n\}t) + (n - 1) < 0,
    $$
    which can be rearranged as
    $$
    N > \frac{n - 1}{-(nr + (n-1)s + \min\{p, n\}t)},
    $$
    where the denominator is positive by assumption.
    Moreover, note that the strict inequality in (\ref{eqn2}) also involves a choice of $N$, so it is convenient to choose an integer $N$ such that
    \begin{equation*}
        N > \mathrm{max}\bigl\{\frac{n - 1}{-(nr + (n-1)s + \min\{p, n\}t)}, n\bigr\}. \qedhere
    \end{equation*}
\end{rmk}
The preceding proposition can be viewed as the Helmke analogue of the zero character case as in Proposition 1.22 of \cite{Bader2008}. However, to obtain the Helmke analogue of the negative character case, restricting only $r + s < 0$ is not enough. The reason is that the Helmke quiver has three marked vertices $v_0, v_1, v_2$, and two unmarked vertices $v_3, v_4$. One should test two families of subrepresentations for stability \cite[Corollary 1.20]{Bader2008}, namely those with $U_3 = U_4 = 0$ and those with $U_3 = V_3$, $U_4 = V_4$. The first family contains the subrepresentations related to observability, because Helmke-observability is defined via the arrows
$$
E, A : v_0 \to v_1, \quad \text{ and }C: v_0 \to v_2.
$$
The second family, however, also involves the arrows $B : v_3 \to v_1$, $D: v_3 \to v_2$, and $F : v_4 \to v_2$.

\begin{prop}\label{Observable unmarked-zero}
    Let $H = (E, A, B, C, D, F)\in \mathcal{H}_{n, m, p}$, and let $\chi = (r, s, t) \in \mathbb{Z}^3$ satisfy $s, t > 0$, $r + s <0$, $r + t > 0$, and $(n-1)r + ns > 0$. Then the following are equivalent:
    \begin{enumerate}
        \item $H$ is Helmke-observable.
        \item For every non-zero subrepresentation
        $$
        U = (U_0, U_1, U_2, 0, 0),
        $$
        one has $\langle \chi, U \rangle_M > 0$.
        \item For every non-zero subrepresentation
        $$
        U = (U_0, U_1, U_2, 0, 0),
        $$
        one has $\langle \chi, U \rangle_M \geq 0$.
    \end{enumerate}
\end{prop}
\begin{proof}
    The implication $(2) \Rightarrow (3)$ is immediate. Now we prove $(3) \Rightarrow (1)$. Suppose $H$ is not Helmke-observable. Then there exist non-zero $(\alpha, \beta)$ and non-zero $x \in v_0$ such that $(\alpha E + \beta A)x = 0$, $Cx = 0$. Let $U_0 = \mathbb{C}x$, $U_2 = 0$, and $U_1 = \operatorname{span} \{Ex, Ax \}$, then
    $$
    E(U_0) + A(U_0) \subseteq U_1,\quad C(U_0) = 0.
    $$
    So $U = (U_0, U_1, 0, 0, 0)$ is a subrepresentation. Since $\alpha Ex + \beta Ax = 0$, the vectors $Ex$ and $Ax$ are linearly dependent. Hence $\dim U_1 \leq \dim U_0 = 1$. If $\dim U_1 = 0$, then $\langle \chi, U\rangle_M = r < 0$. If $\dim U_1 = 1$, then $\langle \chi, U\rangle_M = r + s < 0$.

    Now we prove $(1) \Rightarrow (2)$. Assume $H$ is Helmke-observable, and let $U = (U_0, U_1, U_2, 0, 0)$ be a non-zero subrepresentation. Then we have $E(U_0) + A(U_0)\subseteq U_1$ and $C(U_0) \subseteq U_2$. If $U_0 = 0$, then $\langle \chi, U\rangle_M = s\dim U_1 + t\dim U_2 > 0$. We may therefore suppose $U_0 \neq 0$. For each $(\alpha, \beta) \neq (0, 0)$, consider the map
    $$
    U_0 \to U_1 \oplus U_2,\quad x\mapsto ((\alpha E + \beta A)x, Cx).
    $$
    Helmke-observability implies that this map is injective for every non-zero $(\alpha, \beta)$, since if the map were not injective, then there would exist non-zero $x \in U_0$ such that $(\alpha E + \beta A)x =0$ and $Cx = 0$. Therefore
    $$
    \dim U_1 + \dim U_2 \geq \dim U_0.
    $$
    If $\dim U_1 + \dim U_2 = \dim U_0$ and $\dim U_1 > 0$, then after choosing bases of $(U_0, U_1, U_2)$, the determinant of the above map is a homogeneous polynomial of in $\alpha, \beta$ of positive degree $\dim U_1$. Note that over $\mathbb{C}$, every non-constant homogeneous polynomial in two variables has a zero on $\mathbb{P}^1$. This means there would exist non-zero $(\alpha, \beta)$ such that the determinant of the map vanishes, contradicting the injectivity. Thus for an observable system, either $\dim U_1 + \dim U_2 > \dim U_0$ or $\dim U_1 = 0$, $\dim U_2 = \dim U_0$.

    In the second case, we have
    $$
    \langle \chi, U \rangle_M = r\dim U_0 + t\dim U_0 = (r + t)\dim U_0 > 0
    $$
    since $r + t > 0$. In the first case, we have $\dim U_1 + \dim U_2 > \dim U_0$, thus $\dim U_0 \leq \dim U_1 + \dim U_2 - 1$. Because $r < 0$, the above gives
    $$
    r \dim U_0 + s\dim U_1 + t\dim U_2 \geq r(\dim U_1 + \dim U_2 - 1) + s\dim U_1 + t\dim U_2.
    $$
    Hence $\langle \chi, U \rangle_M \geq (r + s)\dim U_1 + (r + t)\dim U_2 - r$. Because $r + s < 0$, $r + t >0$, and $0 \leq \dim U_1 \leq n$, the right-hand side achieves its minimum when $\dim U_1 = n$ and $\dim U_2 = 0$. Therefore
    $$
    \langle \chi, U \rangle_M \geq (n-1)r + ns > 0.
    $$
    Hence we have established the Helmke analogue of the negative character part for subrepresentations with $U_3 = U_4 = 0$.
\end{proof}

\begin{rmk}
    The previous proposition only treats subrepresentations with $U_3 = U_4  = 0$. But full GIT stability for the Helmke quiver also tests the family where $U_3$ and $U_4$ have full dimension. These subrepresentations involve the arrows
    $$
    B : V_3 \to V_1,\quad D: V_3 \to V_2,\quad F : V_4 \to V_2,
    $$
    which are not involved in the definition of Helmke-observability. Thus one should not expect Helmke-observability to be equivalent to full $\chi$-stability on all of $\mathcal{H}_{n, m, p}$.

    For instance, consider the Helmke system of type $(2, 3, 1)$. Let
    $$
    E = \begin{pmatrix}
        0 & 1\\
        0 & 0
    \end{pmatrix},\quad
    A = \begin{pmatrix}
        0 & 0\\
        0 & 1
    \end{pmatrix},\quad
    C = \begin{pmatrix}
        1 & 0
    \end{pmatrix},
    $$
    and let $B, D, F$ be the zero map. We first check that this system is Helmke-observable. For any non-zero $(\alpha, \beta)$, we have
    $$
    \begin{pmatrix}
        \alpha E + \beta A\\
        C
    \end{pmatrix}
    \begin{pmatrix}
        x_1\\
        x_2
    \end{pmatrix}
    =
    \begin{pmatrix}
        \alpha x_2\\
        \beta x_2\\
        x_1
    \end{pmatrix}
    $$
    This vanishes when $x_1 = x_2 =0$. So this system is Helmke-observable.

    We now show that this Helmke-observable system is not $\chi$-semistable, and hence not $\chi$-stable. Consider the subrepresentation $U = (0,0,0, V_3, V_4)$. Since the marked vertices are $v_0, v_1$, and $v_2$, we have
    $$
    \langle \chi, U\rangle_M = 0 < \langle\chi, \mathrm{V}\rangle_M = 2r + 2s + t  = (r + 2s) + (r + t) > 0.
    $$
    Thus the system is not $\chi$-semistable and in particular, it is not $\chi$-stable.
\end{rmk}

\begin{prop}
    Let $H = (E, A, B, C, D, F) \in \mathcal{H}_{n, m, p}$, and suppose
    $$
    B(V_3) = V_1,\quad D(V_3) + F(V_4) = V_2.
    $$
    Let $\chi = (r, s, t) \in \mathbb{Z}^3$ satisfy $s, t > 0$, $r + s < 0$, $r + t > 0$, and $(n - 1)r + ns > 0$. Then the following are equivalent:
    \begin{enumerate}
        \item $H$ is Helmke-observable,
        \item $H$ is $\chi$-stable,
        \item $H$ is $\chi$-semistable.
    \end{enumerate}
\end{prop}
\begin{proof}
    The family of subrepresentations $U = (U_0, U_1, U_2, 0, 0)$ is treated in Proposition \ref{Observable unmarked-zero}. For the family of subrepresentations $U = (U_0, U_1, U_2, V_3, V_4)$, note that the assumptions $B(V_3) = V_1$ and $D(V_3) + F(V_4) = V_2$ give $U_1 = V_1$ and $U_2 = V_2$. Thus a proper subrepresentation in this family can only have $U_0 \subsetneq V_0$. Then
    $$
    \langle \chi, U \rangle_M - \langle\chi, \mathrm{V}\rangle_M = r(\dim U_0 - n) > 0,
    $$
    given that $r < 0$ and $\dim U_0 < n$. Hence the stability inequality for this family is automatic and strict.

    Combining this with Proposition \ref{Observable unmarked-zero} proves the equivalence of Helmke-observability, $\chi$-stability, and $\chi$-semistability.
\end{proof}
\begin{rmk}
    We now compare the chambers $r+s < 0$, $r + s = 0$, and $r + s >0$ in the above propositions with the classical ones in Proposition 1.22 of \cite{Bader2008}. In the classical case, the acting group is $\mathrm{GL}_n(\mathbb{C})$, and a character is determined by a single integer $\chi \in \mathbb{Z}$, namely
    $$
    g \longmapsto \det(g)^\chi.
    $$

    Bader's Proposition 1.22 shows that the sign of this single integer determines the control-theoretic meaning of GIT stability:
    \begin{itemize}
        \item if $\chi > 0$, then stability and semistability coincide and both are equivalent to controllability,
        \item if $\chi = 0$, then every classical system is semistable, and stability is equivalent to being both controllable and observable,
        \item if $\chi < 0$, then stability and semistability coincide and both are equivalent to observability.
    \end{itemize}
    Thus, for classical systems, the positive and negative sides are respectively the controllability and observability chambers, while the zero character is the point where both control-theoretic conditions appear simultaneously.

    For Helmke systems, the situation is similar but not identical. The group is now
    \begin{equation*}
        \mathrm{GL}_{n,n,p}=\mathrm{GL}_n(\mathbb C)\times \mathrm{GL}_n(\mathbb C)\times \mathrm{GL}_p(\mathbb C),
    \end{equation*}
    and a character has three components
    \begin{equation*}
        \chi = (r, s, t), \quad \chi(g_0, g_1, g_2) = \det(g_0)^r \det(g_1)^s\det(g_2)^t.
    \end{equation*}
    The two copies of $\mathrm{GL}_n$ act on the two state vertices $v_0$ and $v_1$, while the factor $\mathrm{GL}_p$ acts on the output vertex $v_2$. Therefore the analogue of the single classical character is not one of the three numbers $r, s, t$, but the sum $r + s$ when one restricts to the embedded classical locus. That is, under the classical embedding
    \begin{equation*}
        (A, B, C, D) \longmapsto (I_n, A, B, C, D, I_p),
    \end{equation*}
    the corresponding group homomorphism is
    \begin{equation*}
        \varphi_H : \mathrm{GL}_n(\mathbb C)\longrightarrow \mathrm{GL}_{n,n,p},\quad g \mapsto (g, g, I_p).
    \end{equation*}
    Pulling back the Helmke character along this homomorphism gives
    \begin{equation*}
        \chi \circ \varphi_H(g) = \det(g)^r\det(g)^s\det(I_p)^t = \det(g)^{r+s}.
    \end{equation*}
    So the cases $r+s < 0$, $r + s = 0$, and $r + s >0$ can be regarded as the Helmke analogue of the classical cases $\chi > 0$, $\chi = 0$, and $\chi < 0$.

    However, because the Helmke quiver has three marked vertices, the sign of $r + s$ alone does not determine the whole GIT chamber. For a subrepresentation $U = (U_0, U_1, U_2, U_3, U_4)$, the marked weight is
    \begin{equation*}
        \langle\chi, U\rangle_M = r\dim U_0 + s\dim U_1 + t\dim U_2.
    \end{equation*}
    The term $t \dim U_2$ has no direct analogue in the classical character. Moreover, by Corollary 1.20 of \cite{Bader2008}, one needs to test two different families of subrepresentations, namely those with unmarked vertices equal to zero, and those with the unmarked vertices full.

    The case $r + s > 0$ is the analogue of the positive classical character. In the chamber
    \begin{equation*}
        nr + (n-1)s + \min\{p, n\}t < 0, \quad r+s > 0, \quad t>0,
    \end{equation*}
    Bader's Proposition 3.7 states that, for Helmke systems, controllability, stability, and semistability all coincide. The boundary example considered above, $\rho = (-2, 3, 1)$, illustrates why the additional inequality is needed. Here $r + s = 1>0$ and $t = 1 > 0$, so the character $\rho$ still lies in the classical controllability chamber after restriction to the embedded classical locus. Nevertheless, $2r + s + t = 0$, so $\rho$ lies on the boundary of the Helmke controllability chamber. Consequently, Bader's chamber result no longer implies equality of the stable, semistable, and controllable loci. Instead, as shown in Proposition \ref{Prop Controllability Boundary case}, one obtains the inclusions
    $$
    \mathcal H^{st}_{n,m,p}(\chi) \subseteq \mathcal H^{c}_{n,m,p} \subseteq \mathcal H^{ss}_{n,m,p}(\chi),
    $$
    and strictly semistable Helmke systems can occur.

    The case $r + s = 0$ corresponds to the zero character case after restriction to the embedded classical locus. The reason is that the pullback $\chi \circ \varphi_H(g) = \det(g)^{r + s}$ is trivial when $r + s = 0$. However, it is not the same as taking the zero character on the Helmke space. The zero character for the Helmke action is $(0, 0 ,0)$, whereas a character satisfying $r + s = 0$ may still have $(r, s, t) \neq (0, 0 ,0)$. Therefore, unlike the classical zero character case, it is not true that every Helmke system is automatically semistable.
    
    Instead, the condition $r + s = 0$ is for both controllability and observability appear. The controllability part is obtained by perturbing the character slightly into the positive Helmke chamber, for instance by considering characters of the form $\chi_N = (Nr, Ns + 1, Nt)$, which satisfy $Nr + Ns + 1 > 0$ and, for $N$ sufficiently larger, lie in Bader's controllability chamber. The observability part comes from testing subrepresentations of the form $(U_0, U_1, 0, 0, 0)$, for which
    \begin{equation*}
        \langle \chi, U\rangle_M = r \dim U_0 + s \dim U_1 = r (\dim U_0 - \dim U_1).
    \end{equation*}
    Thus a subrepresentation with $\dim U_1 \leq \dim U_0$ is the type of subrepresentation that is Helmke-observable. In this sense, the case $r + s = 0$ is the Helmke analogue of the classical statement that when the character is zero, stability means controllable and observable. The difference is that, in the Helmke setting, the statement is formulated with the addition output parameter $t$ and with the marked quiver stability criterion.

    The case $r + s < 0$ is the analogue of the case where the character of classical systems is negative. Since the pullback of $\chi$ is then a negative power of the determinant, this is the chamber associated with observability. This parallels Bader's classical result that, for $\chi < 0$, stability and semistability are equivalent to observability. In the Helmke setting, however, the condition $r + s < 0$ itself does not control the full GIT stability problem. What it controls is the family of subrepresentations with $U_3 = U_4 = 0$, because this is the family where the arrows $E, A : v_0 \to v_1$, $C: v_0 \to v_2$ defines the Helmke-observability condition. The inequalities
    \begin{equation*}
        s,t>0,\qquad r+s<0,\qquad r+t>0,\qquad (n-1)r+ns>0
    \end{equation*}
    imposed in the preceding proposition ensure that the marked weight $\langle\chi, U\rangle_M$ is positive on all non-zero subrepresentations in this family when the Helmke system is observable.

    The full Helmke GIT problem also requires the family of subrepresentations with $U_3 = V_3$, $U_4 = V_4$. These subrepresentations involve the arrows
    \begin{equation*}
        B : V_3 \to V_1,\quad D : V_3 \to V_2,\quad F:V_4 \to V_2,
    \end{equation*}
    which are not part of the definition of Helmke-observability. Hence Helmke-observability by itself cannot be expected to imply full GIT stability or semistability on all of $\mathcal{H}_{n, m, p}$. This is the main new feature absent from the classical situation. Because any subrepresentation with $U_3 = V_3$ and $U_4 = V_4$ has $U_1 = V_1$ and $U_2 = V_2$, and the remaining inequality is ensured by the negativity of $r$, if we impose the additional constraints
    \begin{equation*}
        B(V_3) = V_1, \quad D(V_3) +F(V_4) = V_2,
    \end{equation*}
    then this family becomes automatic. Under these assumptions, the negative Helmke chamber becomes the analogue of the classical negative character statement, namely Helmke-observability, $\chi$-stability, and $\chi$-semistability coincide.

    In summary, the sign of $r + s$ can be interpreted as the classical character obtained by restricting the Helmke character to the embedded classical locus. But the Helmke chamber decomposition is richer than the classical structure, because Helmke systems have an additional marked output vertex and the two families of subrepresentations impose those extra inequalities.
\end{rmk}
\printbibliography
\end{document}